\documentclass[12pt]{article}
\usepackage[top=1.5in, bottom=1.5in, left=1.4in, right=1.4in]{geometry} 
\usepackage{amssymb}
\usepackage{graphicx}

\usepackage{color}
\usepackage{latexsym}
\usepackage{amssymb,amsmath,amstext}
\usepackage{epsfig}
\usepackage{graphics}
\usepackage{subfigure}
\usepackage{euscript}
\usepackage{ifthen}
\usepackage{wrapfig}
\usepackage{amsthm}

\setlength\parindent{0.2in}
\setlength\parskip{0.05in}

\theoremstyle{plain}%
\newtheorem{theorem}{Theorem}
\newtheorem{proposition}[theorem]{Proposition}
\newtheorem{example}[theorem]{Example}
\newtheorem{lemma}[theorem]{Lemma}
\newtheorem{corollary}[theorem]{Corollary}
\newtheorem{definition}[theorem]{Definition}
\newtheorem{remark}[theorem]{Remark}

\newtheorem{algorithm}{Algorithm}

\date{}
\begin{document}
{
\title{Explicit Deformation of Lattice Ideals via\\ Chip-Firing Games
  on Directed Graphs}

\author{Spencer Backman\footnote{Spencer Backman was partially supported by NSF grant DMS-1201473, a Georgia Institute of Technology Algorithms and Randomness Center Student Fellowship, and by the European Research Council under the European Unions Seventh Framework Programme (FP7/2007-2013)/ERC Grant Agreement no. 279558 during this work.} and Madhusudan Manjunath\footnote{Madhusudan Manjunath was supported by the Feoder-Lynen Fellowship of the Humboldt Foundation during this work.} }

\maketitle


For a finite index sublattice $L$  of the root lattice of type $A$, we construct a deterministic algorithm to deform the lattice ideal $I_L$ to a nearby generic lattice ideal, answering a question posed by Miller and Sturmfels.  Our algorithm is based on recent results of Perkinson, Perlman and Wilmes concerning commutative algebraic aspects of chip-firing on directed graphs.  As an application of our deformation algorithm,  we construct a cellular resolution of the lattice ideal $I_L$  by degenerating the Scarf complex of its deformation.

\section{Introduction}
Let $\Bbbk$ be an arbitrary field and let $\Bbbk[x_0,\dots,x_n]$ be the polynomial ring in $(n+1)$-variables.  Given a lattice $L \subseteq \mathbb{Z}^{n+1}$, the lattice ideal $I_L$ is a binomial ideal associated to $L$.  In particular, $I_L= \langle {\bf x^{\bf u}}-{\bf x^{\bf v}} | ~{\bf u-v} \in L , ~ {\bf u},~ {\bf v} \in \mathbb{N}^{n+1} \rangle$.  Lattice ideals generalize toric ideals and are among the most well-studied objects in combinatorial commutative algebra \cite{MilStu05}.  In particular, the problem of existence and construction of cellular minimal free resolutions for lattice ideals, i.e., resolutions for  lattice ideals that are supported on a cellular complex, has been a source of immense interest in the recent past. 
 The problem of constructing a cellular resolution for any lattice ideal was solved by the hull complex developed by Bayer and Sturmfels \cite{BayStu98}. The hull complex is in general not a minimal free resolution and the problem of existence and construction of {\bf minimal} cellular free resolutions  of lattice ideals remains open.  Sturmfels and Peeva \cite{PeevaStu98} define a notion of {\bf generic} lattice ideals and construct a cellular minimal free resolution called the Scarf complex for generic lattice ideals.  A lattice ideal is called generic if it is generated by binomials ${\bf x^{\bf u}}-{\bf x^{\bf v}}$ such that the vector {\bf u-v} has full support. The term ``generic''  is justified by a theorem of Barany and Scarf that the lattices corresponding to generic lattice ideals are dense in the space of all lattices \cite{BarSca98}. Miller and Sturmfels in their book on Combinatorial Commutative Algebra \cite[Page 189]{MilStu05}  remark that despite this abundance of generic lattice ideals, most lattice ideals one encounters in commutative algebra seem to be nongeneric and they ask for a deterministic algorithm to deform an arbitrary lattice ideal into a nearby ``generic'' lattice ideal.  Our main result in this paper, presented in Section \ref{deform_sect},  is an algorithm (cf. Algorithm \ref{deform_alg}) to deform a lattice ideal $I_L$ where $L$ is a finite index sublattice of $A_n$ and is summarized by the following theorem: 

\begin{theorem}
Let $A_n=\{(x_0,\dots,x_n) \in \mathbb{Z}^{n+1}|~ \sum_{j=0}^n x_j=0\}$ be the root lattice of type $A$ and rank $n$.
 Given a lattice ideal $I_L$, where $L$ is a finite index sublattice of $A_n$, and a real number $\delta>0$, the deterministic algorithm,  Algorithm \ref{deform_alg} (in Subsection \ref{deformalg_subsect}) outputs a generic lattice ideal $I_{\lambda \cdot L_{\rm{gen}}}$ such that the lattice $L_{\rm gen}$ is distance at most $\delta$ from $L$, where the distance is measured by a metric in the space of sublattices of $\mathbb{R}^{n+1}$ (see Definition \ref{latdist_def}) and the scaling $\lambda \cdot L_{\rm gen}$ of $L_{\rm gen}$ is a sublattice of $\mathbb{Z}^{n+1}$. 
 \end{theorem}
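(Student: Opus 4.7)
\emph{Proof sketch.} The plan is to realize $L$ as the image of the Laplacian of a weighted directed graph and then perturb the edge weights by a controlled, deterministic amount. Since $L$ is a rank-$n$ sublattice of $A_n$, any chosen generating set of $L$ can be packaged as the rows of an $(n+1) \times (n+1)$ integer matrix whose rows sum to zero, i.e., as a generalized Laplacian $L_G$ of some weighted directed graph $G$ on vertex set $\{0, 1, \dots, n\}$. This reduction turns the deformation problem for $I_L$ into a deformation problem for the combinatorial data of $G$.

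Using the chip-firing framework of Perkinson, Perlman and Wilmes, a set of binomial generators of $I_{L_G}$ can be read off from combinatorial objects associated to $G$ (such as superstable configurations and the firing sequences that reduce to them). The support of such a binomial is precisely determined by which vertices appear in the corresponding firing sequence, so $I_{L_G}$ is generic exactly when every minimal firing sequence of $G$ visits every vertex of $G$.

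The core of the algorithm is then to define an explicit deformation $G \rightsquigarrow G_{\mathrm{gen}}$ by perturbing each edge weight of $G$ by a small rational quantity chosen deterministically, e.g., a bounded-magnitude term from a fixed sequence indexed by the edges, with magnitude tuned to $\delta$. Let $L_{\mathrm{gen}}$ be the lattice associated to $G_{\mathrm{gen}}$. The main obstacle is to show that, for this specific choice of deterministic perturbation, every minimal firing sequence of $G_{\mathrm{gen}}$ visits all vertices, so that $I_{L_{\mathrm{gen}}}$ is generic. The strategy is to identify the ``nongeneric'' locus in the space of edge-weight parameters as a finite union of proper algebraic subvarieties, controlled by the combinatorics of firing sequences, and to verify that the explicit perturbation provably avoids each such subvariety; this last verification is where the chip-firing combinatorics of Perkinson--Perlman--Wilmes does the heavy lifting.

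Once genericity is established, the distance bound $\mathrm{dist}(L, L_{\mathrm{gen}}) \leq \delta$ follows by choosing the perturbation small enough and using continuity of the lattice metric (Definition~\ref{latdist_def}) in the edge-weight parameters. Because the perturbation is rational, $L_{\mathrm{gen}}$ has rational generators; clearing denominators by a common integer $\lambda$ yields the integral sublattice $\lambda \cdot L_{\mathrm{gen}} \subseteq \mathbb{Z}^{n+1}$. Scaling a lattice by a nonzero scalar does not alter the support of any binomial generator of its associated ideal, so $I_{\lambda \cdot L_{\mathrm{gen}}}$ remains generic, completing the construction.
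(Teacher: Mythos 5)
There is a genuine gap at the heart of your argument: the step where genericity is actually achieved is asserted rather than proved. You say the nongeneric locus in edge-weight space is a finite union of proper subvarieties and that the deterministic perturbation ``provably avoids each such subvariety,'' but you give no mechanism for this avoidance --- and this is exactly the content of the theorem. The paper's proof supplies that mechanism explicitly: it never perturbs the edge weights freely. Instead it fixes the positive left-kernel vector ${\bf \Sigma}$ of the Laplacian $Q$ once and for all, and only adds Laplacians of two-cycle graphs $\vec{H}_{i,j}$ with weights $1/\Sigma_i$ and $1/\Sigma_j$, which keeps ${\bf \Sigma}$ in the left kernel at every step. This is what keeps the Gr\"obner basis of Corollary \ref{Gro_cor} indexed by the \emph{fixed finite} set $\{{\bf 0}\leq {\bf x}\leq {\bf \Sigma}\}$, so genericity reduces to finitely many open conditions $(Q^T{\bf x})_i\neq 0$, each iteration repairs one vanishing coordinate without destroying any nonvanishing one (via the explicit bound (\ref{bounds_eq})), and the process terminates in at most $(n+1)\prod_s\Sigma_s$ steps with a controlled total perturbation $\leq\delta$. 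Your proposal to perturb every edge weight by a fixed small rational amount changes ${\bf \Sigma}$ itself, so the index set of the Gr\"obner basis moves with the perturbation, the finiteness and openness structure you appeal to is no longer pinned down, and no termination or genericity argument is available without essentially reconstructing the paper's scheme.

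Two further misstatements matter. First, it is not true that ``any chosen generating set of $L$ can be packaged as the rows of \dots a generalized Laplacian'': rows summing to zero is necessary but not sufficient, since a directed-graph Laplacian must have nonpositive off-diagonal entries (the paper's example of the lattice spanned by $(1,1,-1,-1)$ shows some lattices admit no such basis at all in lower rank). For full-rank $L$ you need the nontrivial lattice-reduction result of Perkinson--Perlman--Wilmes (Theorem \ref{Latred_theo}), plus the homogenization step producing $\Sigma_0=1$, to get a Laplacian basis at all. Second, your genericity criterion is off: the support of a Gr\"obner binomial is the set of coordinates where $Q^T{\bf x}$ is nonzero, i.e.\ the vertices whose net chip count changes, not the set of vertices that fire in the sequence; a vertex may fire with zero net effect or change without firing. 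The correct criterion, and the one the algorithm certifies, is $(Q^T{\bf x})_i\neq 0$ for all $i$ and all ${\bf 0}\lneq{\bf x}\lneq{\bf \Sigma}$. The final scaling and distance considerations in your sketch are fine, but they rest on the unproved core step.
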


Our deformation algorithm is based on the recent results of Perkinson-Perlman-Wilmes. \cite{Perkinson11} on lattice ideals corresponding to the lattice generated by the rows of the Laplacian matrix of a directed graph. In particular, they showed that every full rank sublattice of $\mathbb{Z}^n$ is generated by the rows of the reduced Laplacian of a directed graph.  This allows us to reduce the problem of deforming lattice ideals  to the problem of deforming Laplacian lattice ideals of directed graphs.  Perkinson-Perlman-Wilmes.  in the same paper also described a certain distinguished Gr\"obner basis of the Laplacian lattice ideal of a directed graph whose equivalent chip-firing interpretation was independently discovered by the first author and Arash Asadi \cite{AsaBac}. The key observation about these Gr\"obner bases which  we exploit in our deformation algorithm is that, in a precise sense, they respect certain perturbations of the lattice.  
In Section \ref{res_sect},  we use our deformation algorithm to construct a cellular free resolution. In fact, the free resolution we construct is supported on a simplicial complex. Recall that the hull complex is also a (non-minimal) cellular free resolution of any lattice ideal $I_L$.  For any lattice ideal $I_L$ where $L$ is a finite index sublattice of $A_n$, the cellular resolution we construct is an alternative to the hull complex.  More precisely, we show the following:

\begin{theorem} (cf. Theorem \ref{Scarfdef_theo}) For any finite index sublattice $L$ of $A_n$, the complex of free $\Bbbk[x_0,\dots,x_n]$-modules obtained from the labelled simplicial complex ${\rm Scarf_{def}}(I_L)$ is supported on a simplicial complex and is a free, in general non-minimal, resolution of  the lattice ideal $I_L$.\end{theorem}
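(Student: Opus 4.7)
The plan is to first apply Algorithm~\ref{deform_alg} to $I_L$ with a sufficiently small $\delta > 0$, producing a generic lattice ideal $I_{\lambda \cdot L_{\rm gen}}$. By the theorem of Peeva and Sturmfels \cite{PeevaStu98}, the Scarf complex of this generic ideal is a simplicial complex supporting a (minimal) cellular free resolution of $I_{\lambda \cdot L_{\rm gen}}$. I would define $\text{Scarf}_{\rm def}(I_L)$ to be the same underlying simplicial complex, but with each vertex relabelled by the generator of $I_L$ that corresponds to the vertex's generic label under the natural bijection between the generators before and after deformation furnished by Algorithm~\ref{deform_alg}; face labels are then defined, as usual, as lcms of vertex labels. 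That $\text{Scarf}_{\rm def}(I_L)$ is supported on a simplicial complex is immediate from Peeva-Sturmfels.

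For the resolution claim I would invoke the standard combinatorial criterion of Bayer-Sturmfels (see \cite[Proposition~4.5]{MilStu05}): a $\mathbb{Z}^{n+1}$-graded labelled simplicial complex supports a cellular free resolution of the ideal generated by its vertex labels exactly when, for every monomial $m$, the subcomplex of faces whose labels divide $m$ is either empty or $\Bbbk$-acyclic. Verifying this for the degenerated complex reduces to an acyclicity statement that I would prove by comparing the degenerated subcomplex $\Delta^{\rm def}_{\leq m}$ with subcomplexes of the generic Scarf complex, exploiting the fact that the labels of $\text{Scarf}_{\rm def}(I_L)$ are limits of generic labels as the perturbation parameter tends to zero. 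In outline, one expresses $\Delta^{\rm def}_{\leq m}$ as the union of generic subcomplexes $\Delta^{\rm gen}_{\leq m_i}$ over the finitely many generic monomials $m_i$ that degenerate into divisors of $m$, each of which is acyclic by Peeva-Sturmfels, and then one shows that the union itself is acyclic.

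The principal obstacle is precisely this last step. Under degeneration, monomials that were distinct in the generic case may coalesce, so $\Delta^{\rm def}_{\leq m}$ can strictly contain any one of the pieces $\Delta^{\rm gen}_{\leq m_i}$, and nontrivial intersections among them must be analyzed. To handle this I would use the distinguished Gr\"obner basis of Perkinson-Perlman-Wilmes, which by the results underlying Algorithm~\ref{deform_alg} is stable under the perturbation and therefore provides a canonical identification of faces on either side of the deformation. This compatibility allows one to contract the coalesced faces within $\Delta^{\rm def}_{\leq m}$ to a unique minimum via a nerve-type argument, yielding the required acyclicity. The same coalescence phenomenon also explains the non-minimality of the resolution: when labels of distinct faces of the generic Scarf complex become equal after degeneration, they produce redundant free summands in the associated complex, so the resolution need not be minimal even though it is exact.
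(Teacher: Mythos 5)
Your framework (relabel the generic Scarf complex, then verify exactness through the Bayer--Sturmfels acyclicity criterion) is the right one, but the step you yourself identify as the principal obstacle is precisely where a proof is missing, and what you sketch there does not close it. Writing $\Delta^{\rm def}_{\leq m}$ as a union of generic subcomplexes $\Delta^{\rm gen}_{\leq m_i}$, each acyclic by Peeva--Sturmfels, does not yield acyclicity of the union: one needs control of all the intersections (Mayer--Vietoris or the nerve lemma), and the proposed ``contract the coalesced faces to a unique minimum via a nerve-type argument'' is an assertion, not an argument --- stability of the Perkinson--Perlman--Wilmes Gr\"obner basis under the perturbation says nothing by itself about the homology of those intersections. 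It is not even clear that $\Delta^{\rm def}_{\leq m}$ equals such a union, since the relabeling need not preserve divisibility of labels in either direction. Two further points are glossed over: the vertices of the Scarf complex here are the (infinitely many) lattice points of the Laurent monomial module of $L_{\rm gen}$, not a finite set of ideal generators, so fixing a single ``sufficiently small $\delta$'' uniformly requires an argument (the paper handles this via $L$-equivariance and a fundamental-domain reduction, and explicitly flags the infinitude of generators as the reason the choice of deformation parameter is delicate).

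The paper's proof of Theorem \ref{Scarfdef_theo} avoids the union problem entirely. Using Theorem 5.4 of \cite{PeevaStu98} (the Scarf complex of a generic lattice ideal depends only on the lcm poset of a grevlex initial ideal, hence is stable under small perturbations), it builds a sequence of deformations $\tilde Q_i$ converging to $Q$ all having the \emph{same} underlying unlabeled Scarf complex. Then, for each fixed degree ${\bf b}\in\mathbb{Z}^{n+1}$, it chooses the deformation small enough that the degenerated subcomplex ${\rm Scarf_{def}}(I_L)_{\leq {\bf b}}$ agrees with a \emph{single} generic subcomplex ${\rm Scarf}(I_{L_{\rm gen}})_{\leq {\bf b}_{\epsilon}}$, where ${\bf b}_{\epsilon}$ is the coordinatewise maximum of the deformed images of the points of $L$ dominated by ${\bf b}$; the vanishing of $\tilde H_j$ then follows directly from \cite[Proposition 4.2]{BayStu98} applied to the generic Scarf complex, with no union or nerve argument needed. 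Coalescence of labels enters only to explain non-minimality (as in Example \ref{scarf_ex}), not in the exactness proof. To repair your proposal you would either have to actually carry out the intersection analysis for your union decomposition (nontrivial and nowhere made concrete), or switch to the per-degree comparison with an infinitesimally small member of a Scarf-stable sequence of deformations, which is the route the paper takes.
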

 
 The complex ${\rm Scarf_{def}}(I_L)$ is constructed by degenerating the Scarf complex of a deformation of $I_L$.  We remark that the minimal free resolution of undirected Laplacian lattice ideals has received significant attention in recent years \cite{Doc, Hop, ManSchWil, MilStu05, MohSho, Pos}, and that deformations of undirected Laplacian lattice ideals were very recently applied to the study of questions arising in algebraic statistics \cite{Kat}.
 
\section{Chip-firing on Directed Graphs}

Let $\vec{G}$ be a directed graph with vertex set $\{v_0, ..., v_n \}$ and adjacency matrix $A$ whose entry $A_{i,j}$ for $0\leq i,j \leq n$ is the number of edges directed from $v_i$ to $v_j$.  Let $V(\vec{G})$ and $E(\vec{G})$ be the vertex set and edge set of $\vec{G}$ respectively. Let ${\mathcal{D}}=diag(\vec{\deg}(v_0), \dots, \vec{\deg}(v_n))$ where $\vec{\deg}(v)$ denotes the number of edges leaving vertex $v \in V(\vec{G})$. We call the matrix $Q=\mathcal{D}-A$ the {\bf Laplacian matrix} of the directed graph $\vec{G}$.  We note that this definition of the Laplacian is the transpose of the Laplacian appearing in the work of  Perkinson-Perlman-Wilmes. \cite{Perkinson11}.

 We now describe the associated chip-firing game on the vertices of $\vec{G}$ coming from the rows of the Laplacian matrix.  Let ${\bf C} \in \mathbb{Z}^{n+1}$, which we call a {\bf chip configuration} whose $i$th coordinate $C_i$ is the number of chips at vertex $v_i$.  We say that a vertex {\bf fires} if it sends a chip along each of its outgoing edges to its neighbors.  We say that a vertex $v_i$ is in {\bf debt} if $C_i<0$.  Note that the process is ``commutative'' in the sense that the order of firings does not affect the final configuration.  For ${\bf f} \in \mathbb{Z}^{n+1}$, we interpret the configuration ${\bf C'}={\bf C}-Q^T{\bf f}$ as the configuration obtained from ${\bf C}$ by a sequence of moves in which the vertex $v_i$ fires $f_i$ times, and we call ${\bf f}$ a {\bf firing}.  We restrict our attention to {\bf strongly connected directed graphs}, directed graphs for which there is a directed path between every ordered pair of distinct vertices. The following lemma is an algebraic characterization of the strongly connected property.  
 
\begin{lemma}
\label{Rightkernel_lem}
A directed graph $\vec{G}$ is strongly connected if and only if there exists a row vector ${\bf \Sigma}^T=(\Sigma_0,\dots,\Sigma_n)$ with strictly positive integer entries that spans the left kernel of $Q$.
\end{lemma}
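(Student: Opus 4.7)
I would prove both directions via the Perron--Frobenius theorem applied to the row-stochastic transition matrix $P := \mathcal{D}^{-1} A$ of the random walk on $\vec{G}$. Once the correct correspondence between null vectors of $Q$ and stationary distributions of $P$ is in place, both directions are short.

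For the forward direction, assume $\vec{G}$ is strongly connected. Every vertex has an outgoing edge (needed to start a path out of it), so $\mathcal{D}$ is invertible and $P$ is a well-defined nonnegative row-stochastic matrix. Strong connectedness is exactly the irreducibility of $P$, so Perron--Frobenius gives that the eigenvalue $1$ is simple with a unique (up to scaling) strictly positive left eigenvector $\pi$. Set $\Sigma^T := \pi^T \mathcal{D}^{-1}$; then
\[
\Sigma^T Q = \pi^T \mathcal{D}^{-1}(\mathcal{D} - A) = \pi^T - \pi^T P = 0,
\]
and $\Sigma > 0$ since $\pi > 0$ and the diagonal of $\mathcal{D}^{-1}$ is positive. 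Since $\pi$ solves a rational linear system it may be chosen rational, and clearing denominators yields a strictly positive integer vector. Finally, the invertible correspondence $\Sigma \leftrightarrow \pi = \mathcal{D}\Sigma$ identifies the left kernel of $Q$ with the left $1$-eigenspace of $P$, which is one-dimensional by simplicity; hence this $\Sigma$ spans the left kernel.

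For the reverse direction I would argue by contrapositive: assuming $\vec{G}$ is not strongly connected, I would exhibit a nonzero left null vector of $Q$ with some zero and some nonzero coordinates, which cannot be a scalar multiple of any strictly positive vector. Pick a sink strongly connected component $T \subsetneq V$, that is, a strongly connected component with no edges leaving $T$. Because no edges leave $T$, every $v_i \in T$ has the same out-degree in $\vec{G}$ as in the induced subgraph $\vec{G}[T]$, so the principal submatrix $Q[T,T]$ is precisely the Laplacian of the strongly connected graph $\vec{G}[T]$. Applying the forward direction to $\vec{G}[T]$ produces a strictly positive integer $\sigma$ with $\sigma^T Q[T,T] = 0$. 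Extend $\sigma$ by zeros outside $T$ to $\tilde{\sigma} \in \mathbb{Z}^{n+1}$. For columns $j \in T$, $(\tilde{\sigma}^T Q)_j$ reduces to $(\sigma^T Q[T,T])_j = 0$, and for columns $j \notin T$ every summand is zero because $Q_{i,j} = -A_{i,j} = 0$ whenever $i \in T$ and $j \notin T$. Thus $\tilde{\sigma}$ is a nonzero element of the left kernel with both zero and nonzero coordinates, finishing the contrapositive.

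The main obstacle is keeping the normalization straight under the paper's convention $Q = \mathcal{D} - A$ (the transpose of the Perkinson--Perlman--Wilmes convention): the stationary distribution $\pi$ of $P$ is \emph{not} itself a left null vector of $Q$, but only becomes one after the left multiplication by $\mathcal{D}^{-1}$. After that bookkeeping, the two directions amount to standard Perron--Frobenius together with the observation that a sink SCC's principal submatrix is a genuine Laplacian to which the forward direction applies.
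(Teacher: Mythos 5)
Your proof is correct, and it takes a genuinely different route from the paper's. The paper argues entirely in elementary chip-firing terms: for the forward direction it notes $Q\mathbf{1}=0$ (so the left kernel is nontrivial), shows that any nonzero left-kernel vector must be strictly positive or strictly negative (if the set $V^+$ of positive coordinates were a proper nonempty subset, strong connectedness forces a net outflow of chips from $V^+$, so some vertex of $V^+$ would lose chips), and gets one-dimensionality because two independent positive kernel vectors would admit a combination of mixed sign; for the converse it passes to the condensation into strongly connected components and derives a contradiction at a \emph{source} component. You instead invoke Perron--Frobenius for the row-stochastic matrix $P=\mathcal{D}^{-1}A$, with the diagonal conjugation $\pi=\mathcal{D}\Sigma$ handled correctly, and for the converse you build an explicit left-kernel vector supported on a \emph{sink} component and extended by zeros. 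Your converse is arguably the more robust of the two: exhibiting a kernel vector with both zero and nonzero coordinates refutes the spanning statement even when the graph is disconnected, a case where a strictly positive kernel vector can still exist (e.g.\ two disjoint directed cycles), so the paper's contradiction from mere positivity is delicate there, whereas your argument addresses ``spans'' head-on. What the paper's route buys is that it is self-contained (no Perron--Frobenius), stays in the chip-firing language of the rest of the paper, and records the stronger structural fact that every nonzero left-kernel vector has a strict sign. One small patch you should make: when the sink component $T$ is a single vertex, $\vec{G}[T]$ has no edges, the restricted $\mathcal{D}$ is not invertible, and your Perron--Frobenius argument does not literally apply to $\vec{G}[T]$; but since no edges leave $T$ the principal submatrix is $Q[T,T]=(0)$, so $\sigma=(1)$ works directly and the construction goes through.
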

\begin{proof}

Let $\vec{G}$ be strongly connected.   By construction, $Q \bf{1} = 0$ where ${\bf 1}=(1,\dots,1)$, which just says that directed chip-firing moves preserve the total number of chips in the graph, therefore the Laplacian is not of full rank and has some nontrivial left kernel.  Given two vectors ${\bf v_1}$ and ${\bf v_2}$ we say that ${\bf v_1} > {\bf v_2}$ when each coordinate of ${\bf v_1}$ is strictly greater than the corresponding coordinate of ${\bf v_2}$.  Suppose that there exists some firing strategy ${\bf \Sigma}$ with ${\bf \Sigma} \not > {\bf 0}$ and ${\bf \Sigma} \not < {\bf 0}$ such that ${\bf \Sigma}$ has no effect on chip configurations, i.e., such that  ${Q}^T{\bf \Sigma}={\bf 0}$.  Let $V^+$ be the set of vertices of $\vec{G}$ such that $\Sigma_i > 0$ for every vector $v_i \in V^+$.  We may assume that $V^+ \neq \emptyset$ by taking the negative of ${\bf \Sigma}$ if necessary.  Since the net amount of chips leaving $V^+$ is positive,  there must exist some integer $j$  such that $v_j \in V^{+}$ and $({Q}^T{\bf \Sigma})_j<0$, a contradiction. Assume that there exist two linearly independent firing strategies ${\bf f_1}>0$ and ${ \bf f_2}>0$, such that ${Q}^T{\bf f_1}={\bf 0}$ and ${Q}^T{\bf f_2}={\bf 0}$, then there exists a non-zero linear combination $\lambda_1{\bf f_1}+ \lambda_2{\bf f_2} \not> {\bf 0}$, which is in the kernel, a contradiction.

Conversely, suppose that $\vec{G}$ is not strongly connected, but that there exists some firing strategy ${\bf \Sigma} > {\bf 0}$ such that $Q^T\bf {\Sigma}={\bf 0}$. Let $V_1, \dots, V_t$ be the partition of vertices of $\vec{G}$ into maximal strongly connected components.  We construct a graph with vertices $V_1,\dots,V_t$  and an edge between 
$(V_i,V_j)$ if there exists $v_i \in V_i$ and $v_j \in V_j$ with $(v_i,v_j) \in E(\vec{G})$. This meta graph has at least two vertices since $\vec{G}$ is not strongly connected.  Furthermore, it is acyclic since otherwise we could find a larger strongly connected component.  Hence,  there exists some source vertex $V_i$, i.e., some component with no edges $(u,v)$ in $\vec{G}$ where $u \in V_i$, $2 \leq j \leq t$ and $v \in V_i$.  The total number of chips leaving $V_i$ is positive, therefore there must exist some vertex $v_k \in V_i$ such that $(Q^T{\bf \Sigma})_k<0$, a contradiction.
\end{proof}


Recall that the reduced Laplacian matrix of a directed graph is the Laplacian matrix with the zeroth row and the zeroth column deleted.

\begin{theorem}{\rm(Perkinson-Perlman-Wilmes \cite[Theorem 5.13]{Perkinson11})}\label{Latred_theo}
Fix any integer $m>0$,  every full rank sublattice of $\mathbb{Z}^m$ has a basis whose elements are the columns of a reduced Laplacian matrix of a directed graph.   Furthermore, we can take this graph to be such that $v_0$ is globally reachable meaning that there is a directed path from $v_i$ to $v_0$ for each $i$.
\end{theorem}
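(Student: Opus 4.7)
The plan is to proceed by induction on the rank $m$. The base case $m = 1$ is immediate: a rank-one sublattice $L = d \mathbb{Z}$ (with $d > 0$) is realized as the $1 \times 1$ reduced Laplacian $(d)$ of the two-vertex digraph with $d$ directed edges $v_1 \to v_0$, and $v_0$ is globally reachable.

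For the inductive step, suppose $L \subseteq \mathbb{Z}^m$ is full rank. Let $d_m$ be the smallest positive integer that appears as the last coordinate of some vector in $L$, and set $L' = L \cap \{x \in \mathbb{Z}^m : x_m = 0\}$; this is a full-rank sublattice of $\mathbb{Z}^{m-1}$. By the induction hypothesis, $L'$ has a basis $\{\tilde b_1, \ldots, \tilde b_{m-1}\}$ forming the columns of a reduced Laplacian $\tilde Q'$ of some digraph $\vec{G}'$ on $\{v_0, \ldots, v_{m-1}\}$ with $v_0$ globally reachable. Lifting each $\tilde b_i$ to $b_i \in L$ by appending a zero last coordinate supplies $m - 1$ of the $m$ basis vectors we need.

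The remaining basis vector $b_m$ is produced as follows. Choose any $w \in L$ with $w_m = d_m$ and modify $w$ by adding an integer combination of $\tilde b_1, \ldots, \tilde b_{m-1}$ (necessarily in $L' \subseteq L$) so that the first $m - 1$ coordinates $c_1, \ldots, c_{m-1}$ of the resulting $b_m$ are nonpositive and the row sums of the combined matrix remain nonnegative. The key chip-firing input here is that $\tilde Q'$ is an invertible $M$-matrix with entrywise nonnegative inverse, a consequence of $v_0$ being globally reachable in $\vec{G}'$. This allows us to pick a canonical ``reduced'' representative of the coset $(w_1, \ldots, w_{m-1}) + L'$ (obtained via chip-firing stabilization) lying in the appropriate cone. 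The resulting block matrix
\[
M = \begin{pmatrix} \tilde Q' & c \\ 0 & d_m \end{pmatrix}
\]
then has positive diagonal, nonpositive off-diagonal, and nonnegative row sums, so it is the reduced Laplacian of a digraph $\vec{G}$ on $\{v_0, \ldots, v_m\}$ extending $\vec{G}'$: the new vertex $v_m$ receives $-c_i$ edges from each $v_i$ ($i < m$) and sends $d_m$ edges directly to $v_0$. Global reachability of $v_0$ is inherited: $v_m \to v_0$ directly, and each $v_i$ ($i < m$) keeps its path to $v_0$ from $\vec{G}'$.

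The main obstacle is the sign and magnitude control on $c$: to keep every row sum nonnegative one needs $c_i \in [-R_i^{(0)}, 0]$ for every $i < m$, where $R_i^{(0)}$ is the row sum of row $i$ of $\tilde Q'$. The volume of this ``box'' can be smaller than $|\det \tilde Q'|$, so a naive pigeonhole over cosets of $L'$ does not suffice, and one must extract the required bound from the combinatorics of chip-firing on $\vec{G}'$---potentially after strengthening the induction hypothesis so that $\vec{G}'$ is guaranteed to have sufficiently large row sums. Handling this sign control cleanly is the principal technical step of the argument.
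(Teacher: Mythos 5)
The paper itself does not prove this statement---it is quoted from Perkinson--Perlman--Wilmes, whose argument is a direct basis-manipulation algorithm on a full basis of $L$ (column operations producing nonpositive off-diagonal entries and a globally reachable sink)---so your proposal must stand on its own, and it follows a genuinely different route: induction on the rank, splitting off $L'=L\cap\{x_m=0\}$ and appending one new column. The skeleton is fine: $L'$ is full rank in $\mathbb{Z}^{m-1}$, the lifts of a basis of $L'$ together with any $b_m$ whose last coordinate is $d_m$ form a basis of $L$, your block matrix is a reduced Laplacian precisely when $-R^{(0)}_i\le c_i\le 0$ (redirecting $-c_i$ of the old $v_i\to v_0$ edges to $v_i\to v_m$), and global reachability does survive this redirection. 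But the proof is incomplete exactly where you flag it, and that step is not a residual technicality---it is the substance of the theorem.

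The gap is genuine because the required coset representative need not exist for the graph the induction hands you. You need some $c\equiv (w_1,\dots,w_{m-1})\pmod{L'}$ in the box $\prod_i[-R^{(0)}_i,0]$, where $R^{(0)}_i$ is the number of edges from $v_i$ into the sink. That box contains $\prod_i(R^{(0)}_i+1)$ lattice points, which can be strictly smaller than $[\mathbb{Z}^{m-1}:L']=\det\tilde Q'$: for instance $\tilde Q'=\begin{pmatrix}5&-4\\-5&5\end{pmatrix}$ is a legitimate reduced Laplacian with globally reachable sink, determinant $5$, but row sums $(1,0)$, so the box holds only two points and at least three cosets have no admissible representative; nothing in your (unstrengthened) hypothesis forbids the induction from returning exactly this graph, and choosing $w$ in a bad coset then blocks the construction of $b_m$. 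The chip-firing input you invoke does not help here: stabilization/superstability gives a representative with $0\le -c_i$ bounded by the \emph{diagonal} entries $\deg^+(v_i)$, not by the row sums $R^{(0)}_i$, i.e.\ it lands in the wrong box. Finally, ``strengthen the induction so that row sums are large'' is not a patch but a restatement of the difficulty: you would have to show that every full-rank lattice admits a reduced-Laplacian basis with prescribed (large) numbers of edges into the sink, which is at least as strong as the theorem itself. This is precisely the obstruction that the cited non-inductive argument avoids by working with the whole basis at once.
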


When $L$ is a full-rank sublattice of the root lattice $A_n$,  Theorem \ref{Latred_theo} can be reformulated as follows: there exists a strongly connected directed graph with $\Sigma_0 = 1$ whose rows of the Laplacian form a basis for $L$. Take a basis $B$ for $L$, and apply the basis algorithm of Perkinson-Perlman-Wilmes. \cite{Perkinson11} to these vectors by ignoring the first coordinate.  Each vector in the resulting generating set $B'$ has a nonpositive first entry because the sum of the coordinates is zero.  Letting $\bf M$ represent the matrix whose rows are the vectors in $B'$ and taking $\sigma$ to be the minimum script vector, the vector $v = -\sigma \bf M$ is positive in the first entry and non positive in the remaining entries.  Including $v$ as the first row of $\bf M$, we obtain a set of vectors coming from the rows of a strongly connected digraph (as described in Theorem \ref{Latred_theo}) because the left kernel is positive and one-dimensional.

The following theorem is important for a combinatorial description for a Gr\"obner basis for $I_L$.
\begin{theorem}{\rm(Asadi -Backman \cite[Corollary 3.7 and Corollary 3.9]{AsaBac}, Perkinson-Perlman-Wilmes. \cite[Theorem 5.11]{Perkinson11})}\label{Grochip_theo}
Let $\vec{G}$ be a strongly connected directed graph  with Laplacian $Q$, let ${\bf \Sigma}=(\Sigma_0,\dots,\Sigma_n)$ be a positive vector in the left kernel of $Q$ such that the entries of ${\bf \Sigma}$ are relatively prime. Let ${\bf C}$ be a configuration of chips which is nonnegative away from $v_0$.  Any sequence of firings ${\bf f}_0,{\bf f}_1,\dots$  that satisfies the following properties:
\begin{itemize}
\item  Each firing is non-zero and satisfies $\bf{0} \leq \bf{f_j} \leq \bf{\Sigma}$, 
\item  For all $j$, we have $({\bf f_j})_0 = 0$, 
\item  No vertex is sent into debt.
\end{itemize}
is a  finite sequence. Furthermore, the final configuration is independent of the order in which the firing is made.  
\end{theorem}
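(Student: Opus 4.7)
My plan is to prove the two assertions separately: first finiteness of the sequence (termination), then independence of the final configuration from the firing order (confluence, i.e.\ the abelian property).

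For termination, the key observation is the conservation law $Q\mathbf{1}=\mathbf{0}$: every row of $Q$ sums to zero, so the quantity $\mathbf{1}^T\mathbf{C}$ is invariant under firings. Combined with the no-debt condition at $v_1,\dots,v_n$, this yields a uniform upper bound $M$ on each non-sink coordinate $C_i$ throughout the process. Suppose, for contradiction, that the sequence $\mathbf{f}_0,\mathbf{f}_1,\dots$ is infinite. Since each $\mathbf{f}_j$ is nonzero and $V(\vec{G})$ is finite, by pigeonhole some vertex $v_i$ has $(\mathbf{f}_j)_i>0$ for infinitely many $j$, so the accumulated firing $\mathbf{F}=\sum_j\mathbf{f}_j$ has its $i$th coordinate divergent. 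Let $S\subseteq V(\vec{G})\setminus\{v_0\}$ be the set of vertices whose accumulated firing diverges. If there were an edge $(v_i,v_k)\in E(\vec{G})$ with $v_i\in S$ and $v_k\notin S$, then $v_k$ would receive $A_{i,k}\cdot\mathbf{F}_i\to\infty$ chips from $v_i$ alone, while its own outflow $\deg(v_k)\cdot\mathbf{F}_k$ stays bounded; so $C_k\to\infty$, contradicting the bound $M$. Hence $S$ has no outgoing edges to its complement; but since $v_0\notin S$, strong connectedness of $\vec{G}$ forces a directed path from any vertex of $S$ to $v_0$ that must exit $S$, a contradiction. Therefore every valid sequence terminates.

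For confluence, I would combine termination with a Newman-style diamond lemma. Given two terminating sequences from $\mathbf{C}$ with accumulated firings $\mathbf{F}$ and $\mathbf{G}$, their final configurations are $\mathbf{C}-Q^T\mathbf{F}$ and $\mathbf{C}-Q^T\mathbf{G}$, so it suffices to show $\mathbf{F}=\mathbf{G}$. The strategy is to prove \emph{local} confluence: whenever $\mathbf{C}$ admits two distinct valid firings $\mathbf{f}$ and $\mathbf{g}$, the configurations $\mathbf{C}-Q^T\mathbf{f}$ and $\mathbf{C}-Q^T\mathbf{g}$ can each be extended by valid firings to a common configuration. Together with termination, Newman's lemma then yields a unique normal form. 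An equivalent approach is a \emph{least action principle}: the accumulated firing of any maximal valid sequence equals the coordinate-wise minimum of all nonnegative integer vectors $\mathbf{u}$ with $u_0=0$ for which $\mathbf{C}-Q^T\mathbf{u}$ is superstable (admits no further valid firing). Such a characterization is manifestly independent of the order of firings.

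The main obstacle lies in establishing local confluence in the present multi-vertex, script-bounded setting. In classical abelian sandpile theory each firing is atomic (a single vertex), and the diamond closes for free: if $v_i$ and $v_j$ can each fire independently, they can be fired in either order. Here a single step $\mathbf{f}_j$ can fire many vertices simultaneously subject only to $\mathbf{f}_j\leq\mathbf{\Sigma}$, and it is not generally true that two valid firings $\mathbf{f}$ and $\mathbf{g}$ from $\mathbf{C}$ can be merged into a single valid firing $\mathbf{f}+\mathbf{g}$---even two separately valid firings of the same vertex may jointly drive it into debt. To close the diamond one must either decompose each multi-vertex firing into a compatible sequence of atomic firings (arguing by induction that a valid order exists) or invoke the superstability characterization directly. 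Both routes demand careful bookkeeping of the interplay between the script bound $\mathbf{f}_j\leq\mathbf{\Sigma}$ and the no-debt condition, and it is precisely this bookkeeping that is carried out in the cited works of Asadi--Backman and Perkinson--Perlman--Wilmes.
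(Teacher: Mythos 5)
First, note that the paper does not prove this theorem at all: it is quoted verbatim from Asadi--Backman and Perkinson--Perlman--Wilmes, so there is no in-paper argument to compare against and your attempt must stand on its own. Your termination argument does stand: conservation of $\mathbf{1}^T\mathbf{C}$ (from $Q\mathbf{1}=\mathbf{0}$), nonnegativity away from $v_0$, and the fact that $v_0$ never fires give uniform bounds on all coordinates (for the step where the escaping edge from $S$ points into $v_0$ you should say explicitly that $C_0\leq \mathbf{1}^T\mathbf{C}$ because the other coordinates are nonnegative, so $C_0$ is bounded above as well), and strong connectedness then forces an edge out of the divergent set $S$, yielding the contradiction. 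That half is correct and complete up to this easily patched point.

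The genuine gap is the second assertion. You reduce order-independence to Newman's lemma or a least-action principle, but you explicitly do not establish local confluence (or the least-action characterization), deferring ``precisely this bookkeeping'' to the cited works --- which is the half of the theorem that actually needs proof here. Moreover, the obstacle you flag is aimed at the wrong join: indeed $\mathbf{f}+\mathbf{g}$ need not be a valid single firing, but the correct common refinement is the coordinatewise maximum $\mathbf{h}=\mathbf{f}\vee\mathbf{g}$. One checks directly that for each $i$ with, say, $h_i=f_i$, one has $(Q^T\mathbf{h})_i=\vec{\deg}(v_i)f_i-\sum_{\ell\neq i}A_{\ell i}h_\ell\leq (Q^T\mathbf{f})_i$, so $\mathbf{C}-Q^T\mathbf{h}\geq \mathbf{C}-Q^T\mathbf{f}$ or $\mathbf{C}-Q^T\mathbf{g}$ coordinatewise where relevant; hence $\mathbf{C}-Q^T\mathbf{h}$ is again nonnegative away from $v_0$, and it is reached from $\mathbf{C}-Q^T\mathbf{f}$ by the single legal firing $\mathbf{h}-\mathbf{f}$ (or by no firing if $\mathbf{g}\leq\mathbf{f}$), and symmetrically from $\mathbf{C}-Q^T\mathbf{g}$; the constraints $\mathbf{0}\leq\mathbf{h}\leq\mathbf{\Sigma}$ and $h_0=0$ are inherited. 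This closes the diamond, and with your termination result Newman's lemma gives uniqueness of the final configuration (injectivity of $Q^T$ on vectors with vanishing $v_0$-coordinate then also gives uniqueness of the accumulated firing vector). So the missing step is genuinely short, but as submitted your proposal proves only finiteness, not the abelian property.
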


These configurations of chips obtained by the process from the previous theorem are referred to in the literature as $v_0$-reduced divisors or superstable configurations.  Let $T$ be a directed spanning tree all of whose edges are directed towards a root $v_0$.  Given $u, v \in V(G)$, we say that $u \leq v$ in the spanning tree partial order associated to $T$ if there is a directed path from $v$ to $u$ in $T$.


\begin{corollary}{(Perkinson-Perlman-Wilmes. \cite[Theorem 5.11]{Perkinson11})\label{Gro_cor}}
The set $\tilde G = \{  \bf{x^{u^+}} -  \bf{x^{u^-}} , u =Q^T {\bf x} , \bf{0} \leq \bf{x} \leq \bf{\Sigma} \}$ is a grevlex Gr\"obner basis for $I_L$ for any linear extension of a rooted spanning tree order with root $v_0$. 
\end{corollary}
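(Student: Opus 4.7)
The plan is to translate the confluent chip-firing dynamics of Theorem \ref{Grochip_theo} into confluent polynomial reduction modulo $\tilde{G}$, then invoke a standard Gr\"obner basis criterion. First, I would check that $\tilde{G} \subseteq I_L$: for each $\mathbf{x}$ with $\mathbf{0} \leq \mathbf{x} \leq \mathbf{\Sigma}$, the vector $u = Q^T \mathbf{x}$ is an integer combination of columns of $Q^T$, hence of rows of $Q$, and therefore lies in $L$, so $\mathbf{x}^{u^+} - \mathbf{x}^{u^-} \in I_L$.

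Next, I would pin down the leading term of each binomial in $\tilde{G}$ under the grevlex order refining the chosen linear extension $\tau$ of the rooted spanning tree order. Because $Q^T \mathbf{\Sigma} = 0$, the firing $\mathbf{x}$ and its complement $\mathbf{\Sigma} - \mathbf{x}$ produce the same binomial up to sign, so it suffices to exhibit, for each pair, a canonical choice of leading term. The tree order, which places $v_0$ at the bottom and organizes every other vertex by its chip-flow toward $v_0$, makes this selection canonical by ensuring that any legal firing strictly decreases the grevlex weight.

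The core step is the following correspondence: a Buchberger-style reduction of a monomial $\mathbf{x}^c$ (with $c$ nonnegative away from $v_0$) by a binomial $\mathbf{x}^{u^+} - \mathbf{x}^{u^-} \in \tilde{G}$ is exactly a legal chip-firing step as in Theorem \ref{Grochip_theo}. Divisibility $\mathbf{x}^{u^+} \mid \mathbf{x}^c$ translates to $c \geq u^+$, hence $c - u \geq u^- \geq 0$, so firing $\mathbf{f} := \mathbf{x}$ sends no vertex into debt (and, after possibly swapping $\mathbf{x}$ with $\mathbf{\Sigma} - \mathbf{x}$, one can arrange $f_0 = 0$ to match the hypotheses of Theorem \ref{Grochip_theo}). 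Applying that theorem, every monomial $\mathbf{x}^c$ reduces in finitely many steps to a unique normal form, namely the superstable ($v_0$-reduced) representative of $c + L$. Hence the standard monomials modulo $\langle \mathrm{LT}(\tilde{G}) \rangle$ are precisely the superstable configurations, which form a transversal for the cosets of $L$ acting on $\mathbb{Z}^{n+1}_{\geq 0}$; this forces $\langle \mathrm{LT}(\tilde{G}) \rangle = \mathrm{in}_\tau(I_L)$, and $\tilde{G}$ is a Gr\"obner basis.

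The hardest part will be the leading-term analysis in the second step: one must give a careful combinatorial argument, using both the spanning tree and the kernel vector $\mathbf{\Sigma}$, showing that grevlex refined by $\tau$ consistently selects the "pre-firing" side of every binomial, and one must reconcile the constraint $f_0 = 0$ required by Theorem \ref{Grochip_theo} with the symmetric parametrization $\mathbf{0} \leq \mathbf{x} \leq \mathbf{\Sigma}$ used to define $\tilde{G}$.
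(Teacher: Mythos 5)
Your proposal follows the same route the paper and its cited sources take: translate the confluence and termination of directed chip-firing (Theorem \ref{Grochip_theo}) into unique normal forms under division by $\tilde{G}$, identify the standard monomials with superstable ($v_0$-reduced) configurations, and conclude via the characterization of a Gr\"obner basis by uniqueness of remainders. The paper does not reprove this result (it cites Perkinson--Perlman--Wilmes and, for the undirected case, Cori--Rossin--Salvy), but the commentary surrounding Corollary \ref{Gro_cor} describes exactly this translation, so your overall plan matches.

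One step would fail as written, however. Your swap-based repair of the hypothesis $(\mathbf{f}_j)_0 = 0$ --- replace $\mathbf{x}$ by $\mathbf{\Sigma} - \mathbf{x}$ whenever $x_0 \neq 0$ --- only works when $\Sigma_0 = 1$. If $\Sigma_0 > 1$ and $0 < x_0 < \Sigma_0$, then neither $\mathbf{x}$ nor $\mathbf{\Sigma} - \mathbf{x}$ has vanishing $0$th coordinate, so the hypotheses of Theorem \ref{Grochip_theo} are met by neither representative. The paper in fact normalizes to $\Sigma_0 = 1$ as a preprocessing step (see the reformulation of Theorem \ref{Latred_theo} for sublattices of $A_n$ and the preprocessing step of Algorithm \ref{deform_alg}), and attributes the Gr\"obner-basis statement specifically to the directed case with $\Sigma_0 = 1$. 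Your argument is sound under that hypothesis, but you should state it explicitly rather than rely on the swap in general. The leading-term comparison you flag as the hardest step --- showing that grevlex refined by a spanning-tree linear extension always selects the ``pre-firing'' monomial --- is indeed the combinatorial core; the paper defers to Cori--Rossin--Salvy and Perkinson--Perlman--Wilmes for that analysis rather than carrying it out itself.
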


In the undirected case Cori-Rosen-Salvy \cite{CorRosSal02} showed that Theorem \ref{Grochip_theo} translates into the statement that the binomials defined by the cuts in the graph, which correspond to firing moves, are a Gr\"obner basis for the Laplacian lattice ideal  with respect to any linear extension of a spanning tree term order.  This is due to the fact that one characterization of a Gr\"obner basis is a generating set such that division with respect to the given term order is unique.  We remark that the exponent vectors of the standard monomials correspond to the $v_0$-reduced divisors, also known as superstable configurations.  Perkinson-Perlman-Wilmes. \cite{Perkinson11} observed that this result of Cori-Rosen-Salvy \cite{CorRosSal02} extends to the case of directed graphs where $\Sigma_0=1$ via Theorem \ref{Grochip_theo}.  This Gr\"obner basis is not minimal in general.

A slight difference in the approaches of \cite{AsaBac} and \cite{Perkinson11} is that the latter more often work with the reduced Laplacian of a graph where $v_0$ is globally reachable (a sandpile graph) while the former work with the full Laplacian of a strongly connected directed graph.  In \cite{AsaBac}, the authors worked with general strongly connected graphs for investigation of Riemann-Roch theory for directed graphs. Translating between the settings of  \cite{AsaBac} and \cite{Perkinson11} requires a little finesse. In contrast to the case of undirected graphs where passage between reduced and full Laplacians is completely transparent as both the row and columns sums are zero,  in the directed case the column sums are not necessarilly zero:  the case when both the row and columns sums are zero corresponds to the situation when our directed graph is Eulerian.  If we take the lattice generated by the rows of the reduced Laplacian $Q$ of a digraph with $v_0$ globally reachable and then homogenize with respect to the coordinate corresponding to $v_0$, the sink vertex, we obtain a full-rank sublattice of $A_n$.  This lattice is generated by the rows of the full Laplacian of a strongly connected directed graph where the row corresponding to $v_0$ can be canonically obtained as $-Q^T\sigma$, where $\sigma$ is the {\it minimal script vector}.  We do not explain the minimal script vector for the reduced Laplacian but refer to \cite{Perkinson11} for its precise definition, and note that if one has the full Laplacian, it is the vector obtained from $\bf{\Sigma}$ by deleting first entry.  On the other hand, if we take the lattice generated by the rows of the full Laplacian of a strongly connected directed graph,  dehomogenization with respect to the coordinate  corresponding to the vertex $v_0$ gives the lattice spanned by the reduced Laplacian if and only if ${\bf \Sigma}_0=1$.  Using a simple variant of Theorem \ref{Latred_theo},  we can always take a generating set for the same lattice coming from a different strongly connected directed graph with ${\bf \Sigma}_0 = 1$.

\begin{example}\label{dirgraph_ex}
Consider the directed graph $\vec{G}$ shown in Figure \ref{digraph}. It has Laplacian matrix:
\begin{figure}\hspace{3.5cm}
 \includegraphics{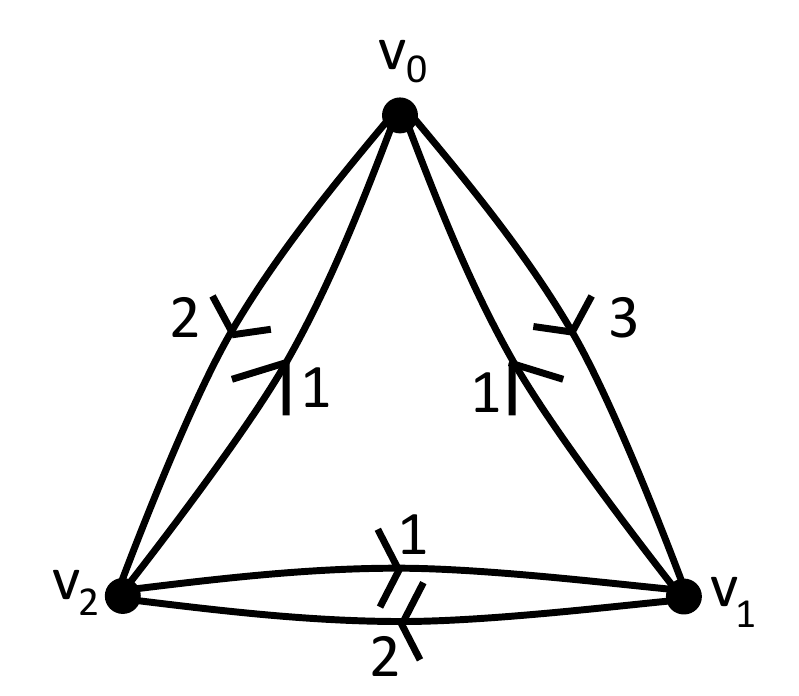} 
 \caption{Example}
 \label{digraph}
\end{figure}

$$Q =
\begin{pmatrix}
5  & -3  & -2\\
-1  & 3 &  -2 \\
-1   & -1  & 2
 \end{pmatrix}$$.
 
 The left kernel $\bf{\Sigma}$ of $Q$ is $(1,2,3)$.  The Laplacian lattice ideal $I_L$, i.e., the lattice generated by the rows of $Q$ reads

  $I_L=\langle x_2^{2}-x_1x_0, x_2^4-x_0^2x_1^2,x_2^6-x_0^3x_1^3, x_1^3-x_0x_2^2, x_1^2-x_0^2, x_1x_2^2-x_0^3,  x_2^4-x_0^4, x_1^6-x_0^2x_2^4, x_1^5-x_0^3x_2^2, 
 x_1^4-x_0^4,x_1^3x_2^2-x_0^5
 \rangle$.  

In fact, the above generating set is a Gr\"obner basis of $I_L$ with respect to  any order with $x_0$ minimum.  Since the exponent vectors $(-2,2,0)$,  $(4,0,-4)$ and $(-4,4,0)$ of the binomials  $x_1^2-x_0^2$, $x_2^4-x_0^4$ and $x_1^4-x_0^4$ do not have full support, the lattice ideal is not evidently generic. In Example \ref{deform_ex}, we deform $I_L$ into a generic lattice ideal.\qed
 \end{example}

\section{Explicit Deformation of Lattice Ideals}\label{deform_sect}
\subsection{Deformation of Lattice Ideals} \label{Laqseq_subsect}

 We start by defining a metric on the space of lattices and use this metric to give a precise definition of a deformation of a lattice ideal.

\begin{definition}\rm{({\bf Metric on the Space of Lattices and Convergence on Lattices)}}\label{latdist_def}
For  sublattices $L_1$ and $L_2$ of $\mathbb{R}^n$,  we define the {\bf distance} $d(L_1,L_2)$ between $L_1$ and $L_2$ as the minimum of $||B_1-B_2||_2$ over all bases $B_1$ and $B_2$ of $L_1$ and $L_2$ respectively expressed as matrices and $||.||_2$ is the $\ell_2$-norm on matrices.   A sequence of lattices $\{L_k\}$ is said to {\bf converge} to a lattice $L_{\ell}$ if for every $\delta>0$ there exists a positive integer $K(\delta)$ such that $d(L_k,L_{\ell}) \leq \delta$ for all $k \geq K(\delta)$.
\end{definition}

See the book of Cassels \cite[Page 127]{Cass59} for a detailed discussion on sequences of lattices and a proof that $d(.  , .)$ is a metric on the space of sublattices of $\mathbb{R}^n$.   Barany and Scarf in \cite{BarSca98}  show that the generic lattices are dense in the spaces of all lattices with respect to the topology induced by this metric.

\begin{definition}\rm{{\bf (Deformation of a Lattice Ideal)}} Given a lattice $L \subset \mathbb{Z}^{n+1}$, a deformation of $I_L$ is a sequence of lattices $\{L_{k}\}$ that converges to $L$ and such that for every lattice $L_i$ in the sequence, there is a non-zero real number $\lambda_i$ such that  $\lambda_i \cdot L_i$ is a sublattice of $\mathbb{Z}^{n+1}$ and the lattice ideal $I_{\lambda_i \cdot L_i}$ is a generic lattice ideal.  We call the sequence $\{ I_{\lambda_k \cdot L_k}\}$ a deformation of the lattice ideal $I_L$. Fix  $\delta>0$,  a lattice $L_{\rm gen}$ is called a $\delta$-deformation of $L$ if  $d(L,L_{\rm gen})=\delta$ and there exists a $\lambda \in \mathbb{R}$ such that $\lambda \cdot L_{\rm gen} \subseteq \mathbb{Z}^{n+1}$ is a generic lattice. We call $\delta$ the deformation parameter.

\end{definition}

\subsection{A Naive Approach to Explicit Deformation}\label{firstapp_subsect}

A natural first approach, but an unsucessful one, to deform a lattice ideal $I_L$ would be to take an arbitrary generating set of $I_L$ and deform its exponents to obtain an ideal generated by binomials all of whose exponents have full support.  The pitfall to this approach is that the resulting ideal need not be a lattice ideal as the following example shows: consider the lattice ideal $I_{A_4}$ where $A_4$ is the root lattice of type $A$ and rank three. The ideal $I_{A_4}$ is minimally generated by the binomials 
$x_0-x_1$, $x_1-x_2$ and $x_2-x_3$.  One deformation of these binomials is $x_0x_2^{\epsilon}x_3^{\epsilon}-x_1^{1+2\epsilon}$, $x_1x_0^{\epsilon}x_3^{\epsilon}-x_2^{1+2\epsilon}$ and $x_2x_1^{\epsilon}x_0^{\epsilon}-x_3^{1+2 \epsilon}$ respectively where $\epsilon=1/k$ for some large integer $k$.  Scaling the exponent vector of each binomial by $k$ we obtain $x_0^{k}x_2x_3-x_1^{k+2}$, $x_1^kx_0x_3-x_2^{k+2}$ and $x_2^kx_1x_0-x_3^{k+2}$. But, for any natural number $k$  the ideal $I_{k}$ generated by these binomials is not saturated with respect to the product of all the variables and is hence not a lattice ideal \cite[Lemma 7.6]{MilStu05}. To see that the ideal is not saturated, note that $x_1^2x_2^{k+1}-x_0^{k+1}x_3^2$ is a binomial in the saturation of $I_{k}$ with respect to the product of all variables. This is because the vector $(-k-1,2,k+1,-2)$ is a point in the lattice generated by the exponents of the binomial generators of $I_k$. But $x_1^2x_2^{k+1}-x_0^{k+1}x_3^2$ is not contained in $I_k$ since $x_1^2x_2^{k+1}$ is not divisible by any monomial term in the binomial generators $x_0^{k}x_2x_3-x_1^{k+2}$, $x_1^kx_0x_3-x_2^{k+2}$ and $x_2^kx_1x_0-x_3^{k+2}$ of $I_k$.  In fact, a key step in our deformation algorithm  is to deform the exponents of a (Gr\"obner) basis of the lattice ideal in such a way that the resulting ideal remains a lattice ideal.

\subsection{Deformation Algorithm}\label{deformalg_subsect}

\begin{algorithm}\label{deform_alg}
Deformation Algorithm
\end{algorithm}

{\bf Input:} A lattice ideal $I_L$ where $L$ is a full rank sublattice of $A_n$ and a real number $\delta>0$.

{\bf Output:} A Gr\"obner basis of a generic Laplacian lattice ideal $I_{\lambda \cdot L_{\rm{gen}}}$ where $L_{\rm{gen}}$ is a generic lattice and such that $d(L_{\rm gen},L) \leq \delta$ and ${\lambda \cdot L_{\rm{gen}} \subseteq \mathbb{Z}^{n+1}}$.

\

As a preprocessing step, apply the lattice reduction algorithm from Theorem \ref{Latred_theo} to compute a strongly connected directed graph $\vec{G}$ whose corresponding Laplacian $Q$ has rows generating $L$ and has left kernel ${\bf \Sigma}$ with $\Sigma_0=1$. \\

Begin with $Q_0:=Q$ and at the $r$th iteration of this step, let $Q_r$ be our current Laplacian matrix.  Let $\lambda_r \in {\mathbb{Q}}_{\geq0}$ be the minimum value such that $\lambda_r Q_r$ has integral entries.  If the Gr\"obner basis given by Corollary \ref{Gro_cor} applied to $\lambda_r Q_r$ has full support, then set $Q_{r} = Q_{gen}$ and $\lambda = \lambda_r$, output this Gr\"obner basis, and take $L_{gen}$ to be the lattice spanned by the rows of $Q_{gen}$. 

 If this Gr\"obner basis constructed by Corollary \ref{Gro_cor} does not have full support, then there exists some vector $\bf{x}$ with ${\bf 0 \lneq x \lneq \Sigma}$ and some index $i$ such that $((\lambda_r Q_r)^T  {\bf x})_i=0$.  Find a coordinate $j$ such that ${x_j / \Sigma_j}  \neq  {x_i / \Sigma_i} $, which exists since ${\bf \Sigma}$ is a primitive vector and ${\bf x} \lneq {\bf \Sigma}$.  Let $\hat{Q}_r$ be the Laplacian matrix of the directed graph $\vec{H}_{i,j}$ on $(n+1)$-vertices labelled $v_0,\dots,v_n$ with two directed edges, one from $v_i$ to $v_j$  of weight $1/ \Sigma_i$ and one from $v_j$ to $v_i$ of weight $1 / \Sigma_j$.   Take $Q_{r+1}=Q_r+\epsilon_r \hat{Q}_r$ with $\epsilon_r \in {\mathbb{Q}}_{\geq0}$ and %

\begin{equation}\label{bounds_eq}
\epsilon_r < - \min_{ \{ 0\leq y \leq {\bf \Sigma} ,  (\hat Q_r ^T y)_k  ( Q_r^T y)_k < 0, k \in \{i, j\}\}}   \{ {( Q_r^T y)_k \over (\hat Q_r ^T y)_k}, {\delta \over |\hat Q_r|_1 (n+1) \prod _s \Sigma _s} \}.
\end{equation}

\begin{theorem}\label{verifyalg}
 Given any lattice ideal $I_L$, where $L$ is a full rank sublattice of $A_n$ and any real number $\delta>0$, Algorithm \ref{deform_alg}  outputs a generic lattice ideal $I_{ L_{\rm{gen}}}$ such that $d(L_{\rm gen},L) \leq \delta$. \end{theorem}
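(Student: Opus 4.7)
The plan is to certify three properties of Algorithm \ref{deform_alg}: (i) at every iteration $Q_r$ remains the Laplacian of a strongly connected directed graph whose left kernel is primitively generated by ${\bf \Sigma}$, so that Corollary \ref{Gro_cor} continues to apply; (ii) a natural integer quantity strictly decreases at each step, forcing termination with a generic ideal; and (iii) the cumulative perturbation is small enough to give $d(L_{\rm gen},L)\le\delta$.

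For (i), the key observation is that $\hat{Q}_r$ is itself the Laplacian of a strongly connected two-vertex subgraph, and the weights $1/\Sigma_i$ and $1/\Sigma_j$ are chosen precisely so that ${\bf \Sigma}^T\hat{Q}_r={\bf 0}$: direct computation gives $({\bf \Sigma}^T\hat{Q}_r)_i=\Sigma_i(1/\Sigma_i)+\Sigma_j(-1/\Sigma_j)=0$ and similarly at coordinate $j$, with all other entries trivially zero. Hence $Q_{r+1}=Q_r+\epsilon_r\hat{Q}_r$ is still the Laplacian of a strongly connected directed graph (edges are only added, preserving connectivity), and its left kernel still contains ${\bf \Sigma}$; by Lemma \ref{Rightkernel_lem} this kernel is one-dimensional and positively spanned, so ${\bf \Sigma}$ remains its primitive generator. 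Choosing $\lambda_{r+1}$ to clear denominators, Corollary \ref{Gro_cor} yields a Gr\"obner basis of $I_{\lambda_{r+1}\cdot L_{r+1}}$ indexed by the same set $\{x:{\bf 0}\lneq x\lneq{\bf \Sigma}\}$.

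For (ii), define the bad-pair set $B_r=\{(y,k):{\bf 0}\lneq y\lneq{\bf \Sigma},\ (Q_r^T y)_k=0\}$; the algorithm halts exactly when $B_r=\emptyset$, so it suffices to show $|B_{r+1}|<|B_r|$. Because $\hat{Q}_r$ is supported only on rows and columns $i,j$, all coordinates of $Q_r^T y$ outside $\{i,j\}$ are preserved. For $k\in\{i,j\}$, direct computation yields $(\hat{Q}_r^T y)_i=(y_i\Sigma_j-y_j\Sigma_i)/(\Sigma_i\Sigma_j)=-(\hat{Q}_r^T y)_j$, and the first clause of \eqref{bounds_eq} is precisely the threshold below which, for every $(y,k)$ with $(Q_r^T y)_k$ and $(\hat{Q}_r^T y)_k$ of opposite sign, the sum $(Q_r^T y)_k+\epsilon_r(\hat{Q}_r^T y)_k$ retains the sign of $(Q_r^T y)_k$; thus no previously nonzero coordinate can vanish. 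Finally, the choice of $j$ with $x_j/\Sigma_j\neq x_i/\Sigma_i$ forces $(\hat{Q}_r^T x)_i\neq 0$, hence $(Q_{r+1}^T x)_i=\epsilon_r(\hat{Q}_r^T x)_i\neq 0$ and the chosen bad pair $(x,i)$ leaves $B_r$. Since $|B_0|$ is finite (bounded by $(n+1)\prod_s(\Sigma_s+1)$), the algorithm halts, and $B_r=\emptyset$ at termination forces every Gr\"obner-basis binomial to have full support, so $I_{\lambda\cdot L_{\rm gen}}$ is generic by definition.

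For (iii), telescoping gives $Q_{\rm gen}-Q=\sum_{r=0}^{N-1}\epsilon_r\hat{Q}_r$, so $\|Q_{\rm gen}-Q\|_2\le\sum_r\epsilon_r|\hat{Q}_r|_1$. The second clause of \eqref{bounds_eq} furnishes $\epsilon_r|\hat{Q}_r|_1\le\delta/((n+1)\prod_s\Sigma_s)$ per iteration, and the counting argument bounds the number of iterations $N$ by the same order of magnitude, yielding $\|Q_{\rm gen}-Q\|_2\le\delta$. Since the rows of $Q$ and $Q_{\rm gen}$ furnish bases of $L$ and $L_{\rm gen}$, Definition \ref{latdist_def} gives $d(L_{\rm gen},L)\le\delta$. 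The main obstacle is carefully executing step (ii): one must simultaneously control every exponent vector $Q_r^T y$ for $y$ in the indexing set and both coordinates $k\in\{i,j\}$, which is why the highly structured two-edge perturbation $\hat{Q}_r$ (localized to $v_i,v_j$ with weights tuned to annihilate ${\bf \Sigma}$) is essential.
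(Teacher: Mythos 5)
Your proposal is correct and follows essentially the same route as the paper's proof: an induction showing $\Sigma$ stays a primitive spanning vector of the left kernel (via $\Sigma^T\hat{Q}_r=0$, preservation of strong connectivity under edge addition, and Lemma \ref{Rightkernel_lem}), termination by showing the first clause of \eqref{bounds_eq} preserves all nonzero coordinates while the chosen pair $(x,i)$ becomes nonzero, and the distance bound from the second clause combined with the iteration count. Your extra explicit computations (e.g.\ $(\hat{Q}_r^T y)_i=y_i/\Sigma_i-y_j/\Sigma_j$) only make the same argument more detailed.
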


\begin{proof}

We first show that the Algorithm \ref{deform_alg} terminates.  In every iteration, by choosing $\epsilon_r$ smaller than the first term in the minimum, for every vector ${\bf 0} \leq  {\bf z} \leq {\bf \Sigma}$ such that  $(Q_r{\bf z})_j \neq 0$,  we have $(Q_{r+1}{\bf z})_j \neq 0$.  Moreover, there exists a vector ${\bf 0} \leq {\bf y}\leq {\bf \Sigma}$ and an index $i$ such that $(Q_{r} {\bf y})_i=0$ and $(Q_{r+1}{\bf y})_i \neq 0$.   Hence, the algorithm terminates after at most $(\prod_j \Sigma_j)(n+1)$ iterations.  

To show that the left kernel of  $Q_{r}$ is spanned by $\Sigma$, we proceed by induction on $r$.  The statement is true for $Q_0$, so we assume it is true for $Q_r$ and verify the statement for $Q_{r+1}$.  The vector $\Sigma$ is also in the left kernel of $\hat Q_r$, thus it is contained in the left kernel of $Q_{r+1}=Q_r+\epsilon_r \hat{Q}_r$.  The matrix $\lambda_{r+1} Q_{r+1}$ is the Laplacian of a directed graph obtained from $\vec{G}$ by adding edges, hence this graph is strongly connected, and the kernel is one-dimensional by Lemma \ref{Rightkernel_lem}, so the same holds for $Q_{r+1}$.  Thus $\Sigma$ spans the left kernel of $Q_{r+1}$.  By Theorem \ref{Latred_theo} and Corollary \ref{Gro_cor}, we obtain a Gr\"obner basis for a Laplacian lattice ideal $I_{L_{{\rm gen}}}$.  Moreover, this Gr\"obner basis has full support because $(\lambda {Q_{\rm gen}}^T  {\bf x})_i\neq0$ (recall that $Q_{\rm gen}$ is the Laplacian matrix associated to $L_{\rm gen}$)  for all ${\bf 0 \lneq x \lneq \Sigma}$ and indices $0 \leq i \leq n$ from which it follows that $I_ {L_{{\rm gen}}}$ is generic.  Since we chose $\epsilon_r$ at each step to be less than the second term in the minimum, every entry in $Q_{{\rm gen}}-Q$ has absolute value at most $\delta/{n+1}$. Hence (by Definition \ref{latdist_def}) $d(L, L_{{\rm gen}}) \leq \delta$.   We note that the second term in the minimum makes use of our knowledge that the algorithm terminates in at most $(n+1)\prod_s \Sigma_s$ iterations.
\end{proof}

Algorithm \ref{deform_alg} generalizes to sublattices of $A_n$ of lower rank as follows:  given any sublattice $L$ of $A_n$, we first extend a basis $B$ of the lattice $L$ to a basis $\tilde{B}$ of a lattice $\tilde{L} \subset \mathbb{Q}^n$ of rank $n-1$ such that $||B-\tilde{B}||_2 \leq \delta/2$. Apply Algorithm \ref{deform_alg} to the lattice $\ \tilde{L}$ with parameter $\delta/2$. 
Note that this approach is somewhat unsatisfactory for lattices of rank strictly smaller than $n$ since the rank of the deformed lattice  will not be the same as the given lattice. In particular the cellular resolution from Section \ref{res_sect} does not apply.

If we impose the restriction that the deformed lattice has the same rank as the given lattice then our approach does not directly generalize as the following example shows. Consider the sublattice spanned by the vector $(1,1,-1,-1)$ of $A_3$. Since every element in this lattice has two positive coordinates, this lattice is not generated by the rows of the Laplacian of a directed graph. Hence,  Theorem \ref{Latred_theo} and Algorithm \ref{deform_alg} do not  apply to this lattice. 

\begin{figure}\hspace{3.5cm}
 \includegraphics{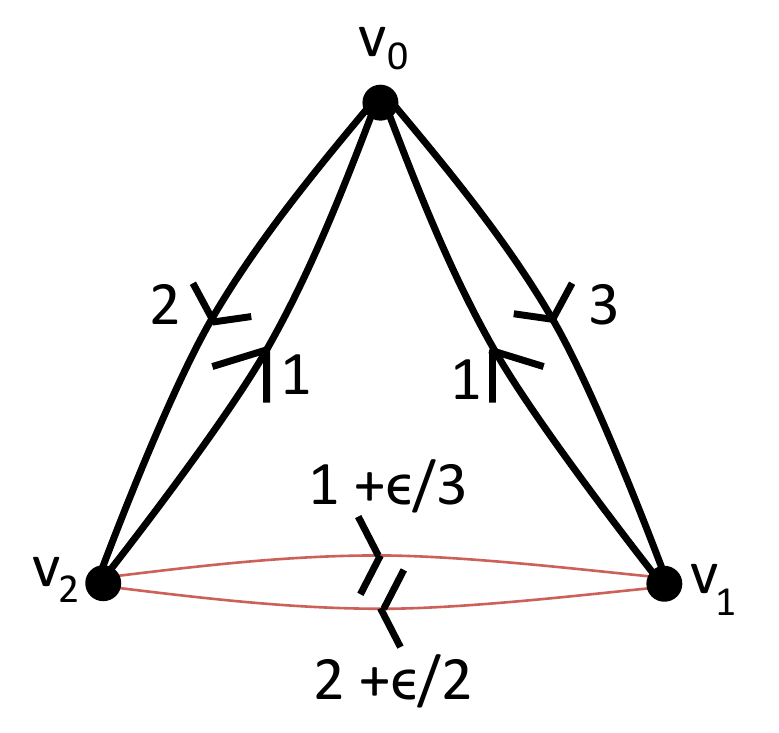} 
 \caption{Example}
 \label{deformation}
\end{figure}

\begin{example}\label{deform_ex}
In Example \ref{dirgraph_ex}, the generating set of the lattice ideal $I_L$ has three binomials $x_1^2-x_0^2$, $x_1^4-x_0^4$ and $x_2^4-x_0^4$ whose exponents do not have full support. Following the explicit deformation algorithm (Algorithm \ref{deform_alg}), we deform the Laplacian ${\vec Q}
$ to the digraph shown in Figure \ref{deformation} to ${\vec Q}$:\\

$$
\vec{Q}^{\epsilon}=
\begin{pmatrix}
5  & -3  & -2\\
-1  & 3+\epsilon/2 &  -2-\epsilon/2 \\
-1   & -1-\epsilon/3  & 2+\epsilon/3
 \end{pmatrix}$$.

\

Suppose $\epsilon=p/q$, for non-zero natural numbers $p$ and $q$.  We can scale this matrix by the integer $6q$ to obtain a matrix with integer entries\\

$$
6q \cdot \vec{Q}^{\epsilon}=
\begin{pmatrix}
30q  & -18q  & -12q\\
-6q  & 18q+3p &  -12q-3p \\
-6q   & -6q-2p  & 12q+2p
 \end{pmatrix}$$.

\

 This matrix is the Laplacian matrix of a directed graph.  Let  $L_{\rm gen}$ be the lattice generated by the rows of the matrix $6q\cdot \vec{Q}^{\epsilon}$. The lattice $I_{\lambda L_{\rm gen}}$ ideal is:\\

$I_{L_{\rm gen}}=\langle x_2^{12q+2p}-x_0^{6q}x_1^{6q+2p}, x_2^{24q+4p}-x_0^{12q}x_1^{12q+4p}, x_2^{36q+6p}-x_0^{18q}x_1^{18q+6p}, x_1^{18q+3p}-x_0^{6q}x_2^{12q+3p}, x_1^{12q+p}-x_0^{12q}x_2^{p}, x_2^{12q+p}x_1^{6q-p}-x_0^{18q}, x_2^{24q+p}-x_0^{24q}x_1^{p}, x_1^{36q+6p}-x_0^{12q}x_2^{24q+6p}, x_1^{30q+4p}-x_0^{18q}x_2^{12q+4p},  x_1^{24q+2p}-x_0^{24q}x_2^{2p}, x_1^{18q}x_2^{12q}-x_0^{30q} \rangle$.

The above generating set is a Gr\"obner basis of $I_{L_{\rm gen}}$ as described in Corollary \ref{Gro_cor}.  The lattice ideal $I_{L_{\rm gen}}$ is generic and $I_{L_{\rm gen}}$ is a $\delta$-deformation of $I_L$.  The choice of the deformation (Figure \ref{deformation}) is not unique. In this example, adding the Laplacian matrix of one directed graph to $\vec{Q}$ was sufficient for the deformation, but in general we might have to perform this operation several times.  \end{example}

\subsection{Geometric Aspects of the Explicit Deformation}\label{Geom_subsec}
The set of Laplacians associated to real edge weighted strongly connected directed graphs with a one-dimensional left kernel spanned by a vector ${\bf \Sigma} \in {\mathbb{N}^{n+1}}$ is  a cone of matrices viewed as points in $\mathbb{R}^{{(n+1)}^2}$.  We denote this cone by $C_{\Sigma}$. The cone $C_{\Sigma}$ is polyhedral, lying in the space of matrices with row sums equal to zero and has facet defining inequalities given by the nonpositivity constraints of the  off-diagonal entries.  The cone $C_{\Sigma}$ is neither closed nor open and its interior is given by the collection of Laplacians of saturated graphs, i.e., those with no non-zero entry and whose left kernel is spanned by ${\bf \Sigma}$.  For each Laplacian matrix in $C_{\Sigma}$, we associate a lattice ideal as follows: this is the (lattice) ideal corresponding to the lattice generated by the rows of the Laplacian matrix.  Theorem \ref{verifyalg} implies that the subset of Laplacian matrices whose lattice ideal is generic is dense in $C_{\Sigma}$.  The closure of $C_{\Sigma}$ is the set of all Laplacians with left kernel containing ${\bf \Sigma}$, i.e., those Laplacians coming from digraphs comprised of a vertex disjoint collection of strongly connected components, whose left kernel is given by the corresponding restriction of ${\bf \Sigma}$. 

\begin{proposition}
The rays of $C_{\Sigma}$ are generated by the Laplacians coming from weighted cycles obtained in the following way:  take some subset $S$ of $V(\vec{G})$ and cyclically order the elements of $S$.  If $v_i$ is followed by $v_j$ in the cyclic order, add an edge $(v_i,v_j)$ with weight $1 \over \Sigma_i$. \end{proposition}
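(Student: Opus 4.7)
The plan is to identify $\overline{C_\Sigma}$ with the classical cone of nonnegative circulations on the complete digraph on $\{v_0,\dots,v_n\}$ and then invoke the standard characterization of its extreme rays. First, parameterize the Laplacians in $\overline{C_\Sigma}$ by edge weights: any such $Q$ has the form $Q_{ij}=-w_{ij}$ for $i\neq j$ and $Q_{ii}=\sum_{j\neq i}w_{ij}$ with $w\in\mathbb{R}_{\geq 0}^E$, where $E$ indexes the off-diagonal positions. Unwinding the condition $\Sigma^T Q = 0$ column by column then reads, for every $j$,
\[
\Sigma_j\sum_{k\neq j}w_{jk} \;=\; \sum_{i\neq j}\Sigma_i\, w_{ij}.
\]

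Next, apply the change of variables $f_{ij}:=\Sigma_i w_{ij}$. Since each $\Sigma_i$ is a positive integer, this is a bijective linear map $\mathbb{R}_{\geq 0}^E\to\mathbb{R}_{\geq 0}^E$, and under it the displayed equation transforms into the standard Kirchhoff circulation equation $\sum_{k\neq j}f_{jk}=\sum_{i\neq j}f_{ij}$ at every vertex. Composing with the edge-weight parameterization gives a linear isomorphism of $\overline{C_\Sigma}$ with the cone $\mathcal{F}$ of nonnegative circulations on the complete digraph on $\{v_0,\dots,v_n\}$, and in particular this matches extreme rays to extreme rays.

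Finally, invoke the classical flow-decomposition theorem that the extreme rays of $\mathcal{F}$ are precisely the positive multiples of indicator vectors of simple directed cycles. Translating a cycle $v_{i_1}\to v_{i_2}\to\cdots\to v_{i_k}\to v_{i_1}$ back through the inverse change of variables yields the weight vector with $w_{i_\ell,i_{\ell+1}}=1/\Sigma_{i_\ell}$ and all other weights zero, which recovers exactly the cycle Laplacian described in the proposition. The one bookkeeping issue is that these cycle Laplacians typically lie in $\overline{C_\Sigma}\setminus C_\Sigma$ (the isolated vertices outside the cycle keep the underlying digraph from being strongly connected on all of $\{v_0,\dots,v_n\}$), but this is harmless because a polyhedral cone and its closure have the same ray structure. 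The step that requires the most care is the change of variables itself, which must be tracked so that the factor $1/\Sigma_i$ lands on the edges leaving $v_i$ rather than the ones entering it; once that is correct, the standard flow-decomposition argument finishes the proof.
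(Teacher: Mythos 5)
Your proof is correct and is essentially the same argument as the paper's: the change of variables $f_{ij}=\Sigma_i w_{ij}$ is precisely the paper's "scale the $i$th row by $\Sigma_i$" map onto Laplacians with left kernel $\mathbf{1}$, and your appeal to flow-decomposition of nonnegative circulations is the same classical fact the paper invokes as "Eulerian digraphs decompose into directed cycles." The only difference is presentational (edge-weight/circulation coordinates versus matrix rows), and your explicit remark about rays of $C_\Sigma$ versus its closure is a small but welcome clarification.
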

\begin{proof}
 First observe that we can scale the $i$th row in a Laplacian by $\Sigma_i$ to linearly map $C_{\Sigma}$ to the collection of Laplacians with left kernel spanned by $\bf {1}$.  These are the Laplacians of Eulerian directed graphs, i.e., those having an Eulerian circuit, and it is a classical fact that such directed graphs decompose into directed cycles, allowing cycles of length 2.  We map these directed cycles back to the aforementioned weighted directed cycles by rescaling the rows.
\end{proof}

\section{Free Resolutions of  Lattice Ideals By Degeneration}{\label{res_sect}}


 We construct a (non-minimal) cellular free resolution of the lattice ideal $I_L$ arising from a finite index sublattice of $A_n$.  This free resolution is constructed by degenerating the Scarf complex of a deformation of the lattice ideal. Algorithm \ref{deform_alg} can be used to perform this deformation.  Our construction is an adaptation of the  free resolutions of monomial ideals constructed from a generic deformation in \cite[Section 6.3]{MilStu05} to lattice ideals. We start with the framework of Bayer and Sturmfels \cite{BayStu98}.  Bayer and Sturmfels \cite{BayStu98} consider a Laurent monomial module associated with a lattice ideal. Note that Laurent monomial modules associated with a lattice are categorically equivalent to lattice ideals. We then deform the exponents of this Laurent monomial module such that the resulting lattice ideal is generic. The Scarf complex of a deformed lattice ideal is hence a minimal free resolution and  we then ``relabel'' this Scarf complex to obtain a free resolution for $I_L$.  Note that the ``relabeling'' procedure is described in detail below. Unlike in the case of monomial ideals in \cite[Section 6.3]{MilStu05}, the Laurent monomial module has infinitely many minimal generators and this makes the choice of the deformation parameter $\delta_0$ more involved.  We choose a sequence of deformations of the lattice ideal $I_{L}$, converging to $I_L$, all of which have the same Scarf complex (as an unlabeled simplicial complex).
 
We construct the deformation parameter $\delta_0$ using Theorem 5.4 of  \cite{PeevaStu98} that provides a description of the Scarf complex of $I_L$ in terms of the Scarf complex of the initial ideal with respect to a degree reverse lexicographic order. More precisely, we use the following restatement of Theorem 5.4 of  \cite{PeevaStu98}, we refer to the paper for the original statement: 

\begin{theorem} (Theorem 5.4 of \cite{PeevaStu98})\label{PeevaStu-rephrase}
The Scarf complex of a generic lattice ideal depends only on the lcm poset of any grevlex initial ideal. Hence there exist perturbations under which the Scarf complex of a generic lattice ideal is stable.
\end{theorem}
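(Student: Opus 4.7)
The plan is to reduce the statement to the original Theorem 5.4 of \cite{PeevaStu98} together with a short finite-dimensional stability argument for lcm posets of monomial ideals. First I would recall the setup: for a generic lattice $L$, the Scarf complex of $I_L$ is defined on the infinite set of minimal generators of the Laurent monomial module $M_L$ by declaring a subset to be a face when its coordinatewise lcm is attained uniquely, and this complex gives a minimal free resolution of $M_L$. The original Peeva-Sturmfels result identifies this infinite complex, up to relabeling of faces, with the Scarf complex of the (finitely generated) grevlex initial ideal $\mathrm{in}_{\prec}(I_L)$, and the identification factors through the finite lcm poset of $\mathrm{in}_{\prec}(I_L)$. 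Consequently, the Scarf complex of $I_L$ is intrinsically determined by this finite poset.

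With the identification in hand, the stability claim follows from a standard piecewise-linear argument. Let $\mathbf{a}_1, \dots, \mathbf{a}_s \in \mathbb{N}^{n+1}$ be the exponent vectors of the minimal generators of $\mathrm{in}_{\prec}(I_L)$. The combinatorial type of the lcm poset is cut out by, for each ordered pair of subsets $S, T \subseteq \{1, \dots, s\}$ and each coordinate $k$, a comparison between $\max_{i \in S}(\mathbf{a}_i)_k$ and $\max_{i \in T}(\mathbf{a}_i)_k$. Each such comparison is governed by a finite Boolean combination of linear inequalities in the $\mathbf{a}_i$, so the combinatorial type is constant on the interior of a polyhedral cell in the space of exponent tuples. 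This yields a threshold $\delta_0 > 0$ so that any perturbation of $L$ of size at most $\delta_0$, after the appropriate integral rescaling, leaves both the initial ideal generators and their lcm poset in the same cell, and hence leaves the Scarf complex of the perturbed generic lattice ideal unchanged. I would then observe that genericity is itself an open condition on the lattice, so shrinking $\delta_0$ if necessary also guarantees that the perturbed ideal remains generic, which is required in order for its Scarf complex to still compute the minimal free resolution.

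The main obstacle is the infinite nature of $M_L$: a priori one could fear that lcm coincidences among the infinitely many minimal generators of $M_L$ must each be monitored separately, so that no finite stability radius exists. The essential content of Theorem 5.4 of \cite{PeevaStu98} is precisely that all such coincidences are governed by the finite lcm poset of any grevlex initial ideal; once this reduction is invoked, the stability statement collapses to the routine polyhedral combinatorics sketched above, and $\delta_0$ can be taken to be, e.g., the $\ell_2$-distance from the current exponent tuple to the nearest wall of the arrangement defining the lcm poset.
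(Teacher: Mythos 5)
First, note that the paper does not prove this statement at all: the first sentence is imported verbatim (as a restatement) from Theorem 5.4 of \cite{PeevaStu98}, and the ``hence'' clause is justified in the surrounding text by the explicit deformation machinery, namely that the Gr\"obner basis of Corollary \ref{Gro_cor} is indexed by the fixed finite set $\{\mathbf{x} : \mathbf{0}\leq \mathbf{x}\leq \mathbf{\Sigma}\}$ and that choosing $\epsilon_r$ according to (\ref{bounds_eq}) stabilizes the grevlex initial ideal and its lcm poset. Your reduction of the first sentence to the original Peeva--Sturmfels theorem is therefore consistent with what the paper does, and your observation that the finite lcm poset tames the infinitely many generators of the Laurent monomial module is exactly the right use of that theorem.

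The gap is in your stability argument. Your piecewise-linear cell argument presupposes that a small perturbation of the lattice $L$ induces a small perturbation of the exponent vectors $\mathbf{a}_1,\dots,\mathbf{a}_s$ of the minimal generators of the grevlex initial ideal, with their number and the choice of leading terms unchanged, so that the tuple stays in the interior of its cell. For arbitrary perturbations in the metric of Definition \ref{latdist_def} this is unjustified: the reduced Gr\"obner basis and the initial ideal are not continuous functions of the lattice, a nearby lattice need not even be rational (so no lattice ideal exists and no rescaling is ``appropriate''), and after integral rescaling the exponent vectors jump rather than move slightly. The same issue undermines your claim that ``genericity is an open condition on the lattice'': irrational lattices are dense, and even among rational ones full support is only preserved if the generating set varies continuously. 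The paper's mechanism is what supplies the missing continuity: one perturbs not the abstract lattice but the Laplacian $Q$ inside the cone $C_{\Sigma}$ of Subsection \ref{Geom_subsec}, keeping the left kernel $\mathbf{\Sigma}$ fixed, so that by Corollary \ref{Gro_cor} the Gr\"obner basis exponents $Q^{T}\mathbf{x}$, $\mathbf{0}\leq\mathbf{x}\leq\mathbf{\Sigma}$, depend linearly on the entries of $Q$; strict coordinate comparisons (hence the lcm poset, the leading terms, and full support) persist for $\epsilon_r$ below the explicit bound (\ref{bounds_eq}). To make your argument complete you must either restrict to this class of perturbations (as Algorithm \ref{deform_alg} does) or prove separately that the perturbations you allow move the initial ideal's generators continuously; without that, the polyhedral cell argument has nothing to act on.
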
 

In the following, we give a precise description of the deformation.  We construct a sequence of generic lattice ideals which converges to $I_L$ and with the same poset defined by the least common multiples of all non-empty subsets of the initial ideal.  We will refer to this poset as the lcm poset. The construction is as follows: take some $\delta$-deformation of the Laplacian matrix $Q$ and let $\hat{Q}_r$ for $0 \leq r \leq t$ be the set of Laplacians which are added in the process of deforming $Q$.  We describe a sequence of $\delta$-deformations of $Q$ which converge to $Q$ all having the same associated unlabeled Scarf complex.  First note that when we add $\epsilon_0 \hat{Q}_0$ to $Q_0$, if we take $\epsilon_0$ small enough taken according to (\ref{bounds_eq}), the Scarf complex of the grevlex initial ideal stabilizes.  Then if we add a small enough multiple $\epsilon_1$ of $Q_1$, the Scarf complex of the grevlex initial ideal again stabilizes.  Proceeding in this way, we obtain a new deformation $\tilde Q$, which we will use as a template for our sequence $\tilde Q_i$ of deformations.  Let $\tilde Q_i$ be the deformation obtained in the previous way but starting with $\epsilon_0 \leq 2^{-i}$ (or any other converging sequence).  Each deformation $\tilde Q_i$ will have a grevlex initial ideal with the same lcm poset.

We now describe the relabeling procedure to construct a free resolution of $I_L$ from the Scarf complex of its deformation. Given a lattice ideal $I_L$, we first deform the lattice ideal $I_L$ into a generic lattice ideal $I_{L_{\rm gen}}$ using the Algorithm \ref{deform_alg}.  Let $B=\{ {\bf b_0},\dots, {\bf b_{n-1}}\}$ and $B_{\rm gen}=\{{\bf b_{0, {\delta}}},\dots, {\bf b_{n-1, \delta}}\}$ be the first $n$ rows of the Laplacian matrix of a directed graph whose rows generate the lattice $L$ and its deformation $L_{\rm{gen}}$ respectively.  We construct the Scarf complex  of the generic lattice $L_{\rm gen}$ as described in \cite{BayStu98}. By construction, the vertices of the Scarf complex are precisely the points of $L_{\rm gen}$.  We relabel the vertices of the Scarf complex of $L_{\rm gen}$ with Laurent monomials ${\bf x^{\alpha}}$ where $\alpha$ is a point in $L$. In particular, we relabel a vertex  of the Scarf complex of $L_{\rm gen}$ by a point in $L$ that is ``close'' to the point in $L_{\rm gen}$ corresponding to this vertex. More precisely, suppose that ${\bf \alpha_{\delta}}$ is the lattice point given by ${\bf \alpha_{\delta}}=\sum_{k=0}^{n-1} \alpha_k {\bf b_{k, {\delta}}}$. We label  the vertex   ${\bf \alpha_{\delta}}$ of the Scarf complex by ${\bf x^{\alpha}}$, where ${\bf \alpha}=\sum_{k=0}^{n-1} \alpha_k {\bf b_k}$ and ${\bf x^{\alpha}}=\prod_{i=0}^{n-1}x_i^{\alpha_i}$. Note that this labeling depends on the choice of basis for $L_G$. This labeling of the vertices of the Scarf complex induces a labelling of the faces by labelling each face with the least common multiple of the labels of its vertices.  We denote this labelled simplicial complex by  ${\rm Scarf_{def}}(I_L)$. Given such a labelled simplicial complex we can associate a complex of free $\Bbbk[x_0,\dots,x_{n}]$-modules as described in \cite[Chapter 9.3]{MilStu05}.  For a labelled complex of $\Bbbk[x_0,\dots,x_{n}]$-modules $\mathcal{C}$ and for a vector ${\bf b}\in \mathbb{R}^{n+1}$, let $\mathcal{C}_{\leq {\bf b}}$ be the subcomplex of all faces of $\mathcal{C}$ such that the exponents of their labels are dominated coordinate-wise by ${\bf b}$.

 \begin{theorem} \label{Scarfdef_theo} The complex  of free  $\Bbbk[x_0,\dots,x_{n}]$-modules associated to ${\rm Scarf_{def}}(I_L)$  is exact and hence this complex is a free resolution of $I_L$.\end{theorem}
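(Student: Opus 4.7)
The plan is to verify the Bayer--Sturmfels acyclicity criterion for cellular resolutions \cite{BayStu98}: a labelled simplicial complex $\mathcal{C}$ supports a free resolution of the Laurent monomial module $M_L=(\mathbf{x}^{\alpha}:\alpha\in L)$ if and only if for every $b\in\mathbb{Z}^{n+1}$ the upper-bounded subcomplex $\mathcal{C}_{\leq b}$ is $\Bbbk$-acyclic. The relabelling $\alpha_{\delta}\mapsto\alpha$ is $L$-equivariant, because it identifies the bases $B_{\mathrm{gen}}$ and $B$ through the same coefficient vector, so the resolution of $M_L$ obtained in this way descends along the $L$-action to a free resolution of $I_L$ exactly as in \cite[Chapter~9.3]{MilStu05}. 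It therefore suffices to establish $\Bbbk$-acyclicity of $\mathrm{Scarf_{def}}(I_L)_{\leq b}$ for every $b\in\mathbb{Z}^{n+1}$.

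The second step is to exploit the sequence of deformations $\tilde Q_i$ constructed just above the theorem. By Theorem~\ref{PeevaStu-rephrase}, every $I_{\tilde L_i}$ has the same lcm poset of its grevlex initial ideal, so the Scarf complexes of the $I_{\tilde L_i}$ share a common underlying simplicial complex $\Delta$; only the vertex labels vary with $i$. For each $i$, Peeva--Sturmfels guarantees that $\Delta$ equipped with the deformed labels from $\tilde L_i$ is a minimal free resolution of the generic Laurent monomial module $M_{\tilde L_i}$, and in particular the upper-bounded subcomplexes $\Delta^{(i)}_{\leq c}$ are $\Bbbk$-acyclic for every $c\in\mathbb{R}^{n+1}$.

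Fix $b\in\mathbb{Z}^{n+1}$ and let $V_b=\{\alpha\in L:\alpha\leq b\}$. This set is finite, because $L$ has finite index in $A_n$, so an upper bound on all coordinates together with the zero-sum constraint cuts out a compact region of $\mathbb{R}^{n+1}$; consequently $\mathrm{Scarf_{def}}(I_L)_{\leq b}$ is a finite subcomplex of $\Delta$ with vertices indexed by $V_b$. For each $\alpha\in V_b$ one has $\|\alpha-\alpha_{\delta_i}\|_{\infty}\leq \delta_i\cdot\max_{\alpha\in V_b}\|\alpha\|_{1}\to 0$, while for $\alpha\notin V_b$ some coordinate satisfies $\alpha_k\geq b_k+1$, so $(\alpha_{\delta_i})_k>b_k+1/2$ once $i$ is large. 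Choosing $\eta=(1/2,\ldots,1/2)$, the subcomplex $\mathrm{Scarf_{def}}(I_L)_{\leq b}$ therefore coincides as a subcomplex of $\Delta$ with $\Delta^{(i)}_{\leq b+\eta}$ for all sufficiently large $i$, and $\Bbbk$-acyclicity of the latter is supplied by the previous paragraph.

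The main obstacle is the boundary analysis just carried out: when a lattice point $\alpha\in L$ satisfies $\alpha_j=b_j$ exactly, the deformed coordinate $(\alpha_{\delta_i})_j$ can in principle land on either side of $b_j$, so a naive single deformation would fail to match subcomplexes. It is precisely the strictly positive slack $\eta$, together with the freedom to pass arbitrarily far down the sequence $\delta_i\to 0$, that absorbs this ambiguity and forces the two subcomplexes to coincide. Combining the resulting $\Bbbk$-acyclicity of $\mathrm{Scarf_{def}}(I_L)_{\leq b}$ for every $b\in\mathbb{Z}^{n+1}$ with the $L$-equivariance noted in the first paragraph then gives, via Bayer--Sturmfels, a free resolution of $I_L$ supported on $\mathrm{Scarf_{def}}(I_L)$.
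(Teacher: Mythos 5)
Your proposal is correct and follows essentially the same route as the paper's proof: the Bayer--Sturmfels acyclicity criterion, stability of the underlying Scarf complex along the deformation sequence $\tilde Q_i$ via the lcm poset (Theorem \ref{PeevaStu-rephrase}), and identification of each truncation $\mathrm{Scarf_{def}}(I_L)_{\leq \mathbf{b}}$ with a truncation of the Scarf complex of a sufficiently small generic deformation, whose acyclicity comes from Peeva--Sturmfels; the $L$-equivariance point you make is exactly what the paper records in a remark. Your matching step with the slack $\eta$ is in fact a bit more explicit than the paper's $\mathbf{b}_{\epsilon}$; the only thing to add is that excluding points $\alpha\notin V_{\mathbf b}$ requires your compactness observation applied on the deformed side (only finitely many lattice points can ever satisfy $\alpha_{\delta_i}\leq \mathbf b+\eta$, since the deformed points also have coordinate sum zero and the bases converge), because the estimate $(\alpha_{\delta_i})_k> b_k+1/2$ is not uniform over all of $L\setminus V_{\mathbf b}$.
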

 
\begin{proof}


  We use Theorem 5.4 of  \cite{PeevaStu98} that provides a description of the  Scarf complex of $I_L$ in terms of the Scarf complex of the initial ideal with respect to a degree reverse lexicographic order.  In particular, Theorem 5.4  of  \cite{PeevaStu98} shows that the abstract simplicial complex underlying the Scarf complex of $I_L$ depends only on the lcm poset.  Hence, the sequence of deformations $\tilde Q_i$ described above have associated lattice ideals with the same underlying Scarf complex.

By the criterion for exactness of a labelled complex described in \cite[Proposition 4.2]{BayStu98},  it suffices to show that the reduced homology group ${\tilde{H}}_j({\rm Scarf_{def}(I_L)}_{{\leq \bf b}}, \Bbbk)=0$ for every vector ${\bf b}$ in $\mathbb{Z}^{n+1}$ and for all integers $j \geq 0$.  Since the abstract simplicial complex underlying the Scarf complex of the lattice ideal is the same for sufficiently small deformations, we know that for every ${\bf b \in \mathbb{Z}}$, there exists  an $\epsilon>0$ such that  ${\tilde {H}}_j({ \rm Scarf_{def}}(I_L)_{ \leq {\bf b}}, \Bbbk)={\tilde{H}}_j(\rm Scarf(I_{L_{{\rm gen}})_{ \leq {\bf b_{\epsilon}}}}, \Bbbk))$ where ${\bf b_{\epsilon}}$ is the coordinate-wise maximum of all points ${\bf \alpha_{\epsilon}}$ in $L_{\rm gen}$ such that ${\bf  \alpha}$ is a point in $L$ that is dominated coordinate-wise by ${\bf b}$ (recall the construction of ${\bf \alpha_{\epsilon}}$ from ${\bf \alpha}$ described in the paragraph before Theorem \ref{Scarfdef_theo}).  From \cite[Proposition 4.2]{BayStu98},  ${\tilde{H}}_j (\rm Scarf(I_{L_{{\rm gen}})_{ \leq {\bf b_{\epsilon}}}}, \Bbbk)=0$ for all ${\bf b_{\epsilon}} \in \mathbb{R}^{n}$ and for all integers $j \geq 0$. Hence, the complex  ${\rm Scarf_{def}}(I_L)$  is exact. 

\end{proof}

A similar method is used by Manjunath and Sturmfels \cite{ManStu13} to construct a non-minimal free resolution for toppling ideals, i.e., the special case where the lattice is generated by the rows of the Laplacian of an undirected connected graph.  In the following example, we illustrate the construction of ${\rm Scarf_{def}}(I_L)$ and show that  ${\rm Scarf_{def}}(I_L)$ is not in general minimal.

\begin{example}\label{scarf_ex}
Consider the lattice ideal:
  $I_L=\langle x_2^{2}-x_1x_0, x_2^4-x_0^2x_1^2,x_2^6-x_0^3x_1^3, x_1^3-x_0x_2^2, x_1^2-x_0^2, x_1x_2^2-x_0^3,  x_2^4-x_0^4, x_1^6-x_0^2x_2^4, x_1^5-x_0^3x_2^2, 
 x_1^4-x_0^4,x_1^3x_2^2-x_0^5 \rangle$ from Example \ref{dirgraph_ex}. 
 
 Its deformation constructed in Example \ref{deform_ex} with $p=1,q=2$. 
 
$I_{L_{\rm gen}}=\langle x_2^{26}-x_0^{12}x_1^{14}, x_2^{52}-x_0^{24}x_1^{28},x_2^{78}-x_0^{36}x_1^{42},x_1^{39}-x_0^{12} x_2^{27},x_1^{25}-x_0^{24}x_2, x_1^{11}x_2^{25}-x_0^{36},x_2^{51}-x_0^{48}x_1^3,x_1^{78}-x_0^{24}x_2^{54},x_1^{64}-x_0^{36}x_2^{28}, x_1^{50}-x_0^{48} x_2^2, x_0^{60}-x_1^{36} x_2^{24}  \rangle$.

As describe in the algorithm, we construct the Scarf complex of the Laurent monomial module corresponding to $L_{\rm gen}$ and then relabel the vertices of this Scarf complex by Laurent monomials ${\bf x}^{\alpha}$ where $\alpha$ is a point in $L$. For example, the vertex corresponding to the point $(26,-12,-14)$ is labelled with the Laurent monomial $x_1^{2}x_2^{-1}x_3^{-1}$, the vertex corresponding to the point $(78,-36,-42)$ is labelled with $x_1^{6}x_2^{-3}x_3^{-3}$, the vertex corresponding to their sum $(104,-48,-56)$ is labelled with the product $x_1^{8}x_2^{-4}x_3^{-4}$ of their labels.

Using \cite[Theorem 5.2]{PeevaStu98},  we can determine the number of faces of a given dimension in its Scarf complex from its initial ideal with respect to any degree reverse lexicographic order. For the degree reverse lexicographic term order with the order $x_0>x_1>x_2$ on the variables,  the initial ideal is:

$M= \langle  x_1^{25}, x_0^{12} x_1^{14}, x_0^{36}\rangle$

Since $M$ is a monomial ideal in two variables, we conclude that its Scarf complex has dimension one. Furthermore, since $M$ has three minimal generators, its Scarf complex has three vertices and two edges. Hence, by \cite[Theorem 5.2]{PeevaStu98}, the Scarf complex of $I_{L_{\rm gen}}$ has one vertex, three edges and two faces and is a triangulation of the two-dimensional torus $\mathbb{R}^3/L_{\rm gen}$. We degenerate the Scarf complex of $I_{L_{\rm gen}}$ to construct a free resolution of the lattice ideal $\Bbbk[x_0,x_1,x_2]/I_L$ and hence, the rank of free modules of this free resolution is one, three and two in  homological degrees zero, one and two respectively. This free resolution is non-minimal since the Betti numbers of $\Bbbk[x_0,x_1,x_2]/I_L$ as computed using the package Macaulay 2 read  one, two and one respectively. 

\end{example}

{\bf Acknowledgments:} We thank Anders Jensen, Bernd Sturmfels and Josephine Yu for their helpful comments and discussions on this project. We are very grateful to Matt Baker and Frank-Olaf Schreyer for their guidance and support.  Thanks to Farbod Shokrieh for his encouragement.

\bigskip
\medskip

\begin{small}
{
\noindent
Madhusudan Manjunath, Department of Mathematics,\\
University of California, Berkeley,
USA. \\
{\tt madhu@math.berkeley.edu}

\medskip

\noindent
Spencer Backman, Department of Computer Science,\\
University of Rome ``La Sapienza", Rome,
Italy.\\
{\tt backman@di.uniroma1.it}
} 
\end{small}

\end{document}